\documentclass[11pt,reqno]{amsart}
\usepackage{graphicx}

\usepackage{amssymb}
\usepackage{cite}
\usepackage{amsmath}
\usepackage{latexsym}
\usepackage{amscd}
\usepackage{amsthm}
\usepackage{mathrsfs}
\usepackage{url}

\usepackage[backref=page]{hyperref}
\numberwithin{equation}{section}

\vfuzz2pt 
\hfuzz2pt 
\newtheorem{thm}{Theorem}

\newtheorem{prop}[thm]{Proposition}

\newtheorem{exam}{Example}[section]

\theoremstyle{definition}
\newtheorem{defn}{Definition}[section]
\theoremstyle{remark}
\newtheorem{rem}[thm]{Remark}
\newtheorem*{ack}{Acknowledgment}
\setcounter{section}{0}
\def\C{\mathbb C}
\def\R{\mathbb R}

\def\SS{\mathbb S}
\def\cal{\mathcal}

\def\pt{\partial}

\allowdisplaybreaks

\begin{document}
\title[Weighted Reilly formula and Steklov eigenvalue estimates]{A weighted Reilly formula for differential forms and sharp Steklov eigenvalue estimates}
\author[C.~Xiong]{Changwei~Xiong}
\address{School of Mathematics, Sichuan University, Chengdu 610065, Sichuan,  P.~R.~China}
\email{\href{mailto:changwei.xiong@scu.edu.cn}{changwei.xiong@scu.edu.cn}}
\date{\today}
\subjclass[2010]{{35P15}, {47A75}, {49R05}, {35P20}}
\keywords{Reilly formula; differential form; Steklov eigenvalue problem; eigenvalue estimate; weight function}

\maketitle

\begin{abstract}
First we establish a weighted Reilly formula for differential forms on a smooth compact oriented Riemannian manifold with boundary. Then we give two applications of this formula when the manifold satisfies certain geometric conditions. One is a sharp lower bound for the first positive eigenvalue of the Steklov eigenvalue problem on differential forms investigated by Belishev and Sharafutdinov (2008) and Karpukhin (2019). The other one is a comparison result between the spectrum of this Steklov eigenvalue problem and the spectrum of the Hodge Laplacian on the boundary of the manifold. Besides, at the end we discuss an open problem for differential forms analogous to Escobar's conjecture (1999) for functions.
\end{abstract}

\section{Introduction}\label{sec1}

In the differential geometry and geometric analysis, the classical Reilly formula \cite{Rei77} has been a crucial and extensively-studied tool and has led to lots of interesting applications. See e.g. \cite{CW83,PRS03} and references therein. Moreover, this formula admits various extensions, which produces further applications. The extensions include the weighted version \cite{QX15} for functions, the version \cite{RS11} for differential forms, the versions \cite{HMZ01,CJSZ19} in the spin geometry, the version \cite{GKY13} on K\"{a}hler manifolds, and the versions \cite{KM17,MD10} on Riemannian manifolds with density. In this paper we prove the following weighted Reilly formula for differential forms.
\begin{thm}\label{thm-Reilly}
Let $(M^{n+1},g)$ be an $(n+1)$-dimensional smooth compact connected oriented Riemannian manifold with boundary $\Sigma=\pt M$. Let $N$ be the inner unit normal vector field along $\Sigma$ and $J:\Sigma \rightarrow M$ the inclusion mapping. Let $\omega\in \Omega^p(M)$ be a differential form of the degree $p$ and $f\in C^\infty(M)$ a smooth function on $M$. There holds
\begin{align*}
&\int_M   f\left( |\delta \omega |^2+ |d\omega |^2- |\nabla \omega |^2\right)dv \\
=&-\int_M  2\langle  \omega,  i_{\nabla f}(d\omega) \rangle dv +\int_M \langle \omega, \nabla^2 f(\omega)\rangle+\Delta f\cdot |\omega |^2+f \langle W_{M}^{[p]}(\omega),\omega\rangle dv \\
&-\int_\Sigma f_N  |J^*\omega |^2  da+2\int_\Sigma f\langle \delta^\Sigma (J^*\omega),i_N\omega \rangle da+\int_\Sigma f\mathcal{B}(\omega,\omega) da,
\end{align*}
where $i$ denotes the interior product, $\nabla^2 f (\omega)$ the operation on $\Omega^p(M)$ induced by $\nabla^2 f$ as a $(1,1)$ tensor (see Definition~\ref{def-2.1} and Example~\ref{exa-2.2}), $W_M^{[p]}$ the Weitzenb\"{o}ck curvature, the term $\mathcal{B}(\omega,\omega)$ is given by
\begin{equation}\label{eq-B}
\mathcal{B}(\omega,\omega)=\langle  S^{[p]}(J^*\omega), J^*\omega\rangle+nH|  i_N\omega |^2-\langle S^{[p-1]}(i_N\omega) ,i_N \omega\rangle,
\end{equation}
the function $H$ is the mean curvature of the boundary $\Sigma$, and $S^{[k]}$ is the operator on $\Omega^{k}(\Sigma)$ induced by the shape operator $S:T\Sigma\to T\Sigma$ (see Definition~\ref{def-2.1} and Example~\ref{exa-2.1}).
\end{thm}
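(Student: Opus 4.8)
The plan is to start from the pointwise Weitzenb\"ock decomposition $\Delta_H\omega=(d\delta+\delta d)\omega=\nabla^*\nabla\omega+W_M^{[p]}(\omega)$, pair it with $f\omega$, integrate over $M$, and integrate by parts to convert the three second-order operators into the three quadratic densities $|d\omega|^2$, $|\delta\omega|^2$, $|\nabla\omega|^2$. Concretely, I would use the weighted Green formulas $\int_M f\langle d\alpha,\beta\rangle\,dv=\int_M\langle\alpha,\delta(f\beta)\rangle\,dv+(\text{bdry})$ and its dual for $\delta$, together with $\delta(f\beta)=f\delta\beta-i_{\nabla f}\beta$ and $d(f\gamma)=df\wedge\gamma+f\,d\gamma$; for the rough Laplacian I would use $\int_M f\langle\omega,\nabla^*\nabla\omega\rangle\,dv=\int_M f|\nabla\omega|^2\,dv+\tfrac12\int_M\langle\nabla f,\nabla|\omega|^2\rangle\,dv+(\text{bdry})$. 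Each integration by parts produces a boundary integral over $\Sigma$ carrying the outer normal (equal to $-N$), which I would postpone to the end. After this step the left-hand side equals $\int_M f\langle\omega,W_M^{[p]}(\omega)\rangle\,dv$ plus several interior terms linear in $\nabla f$ plus the collected boundary integrals.

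The next task is to assemble the linear-in-$\nabla f$ interior terms, namely $-\int_M\langle\omega,i_{\nabla f}d\omega\rangle$, $\int_M\langle i_{\nabla f}\omega,\delta\omega\rangle$, and $\tfrac12\int_M\langle\nabla f,\nabla|\omega|^2\rangle$, into the stated expression $-2\langle\omega,i_{\nabla f}d\omega\rangle+\langle\omega,\nabla^2f(\omega)\rangle+\Delta f\,|\omega|^2$. The key identity here is Cartan's formula $\mathcal{L}_{\nabla f}\omega=d\,i_{\nabla f}\omega+i_{\nabla f}d\omega$ combined with the relation $\mathcal{L}_{\nabla f}\omega=\nabla_{\nabla f}\omega+\nabla^2f(\omega)$, where $\nabla^2f$ acts as the $(1,1)$-tensor of Definition~\ref{def-2.1}; this is precisely the mechanism that creates the term $\langle\omega,\nabla^2f(\omega)\rangle$. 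Integrating $\int_M\langle i_{\nabla f}\omega,\delta\omega\rangle$ by parts, substituting these two identities, and then converting $\int_M\langle\nabla f,\nabla|\omega|^2\rangle$ once more via the divergence theorem (which is where $\Delta f\,|\omega|^2$ is born, with sign fixed by the convention $\Delta=\delta d=-\operatorname{div}\nabla$ on functions) yields the desired interior terms, the doubling of $i_{\nabla f}d\omega$ coming from combining the $d\omega$-contribution of the first Green formula with the $i_{\nabla f}d\omega$ produced by Cartan's formula. This step is purely algebraic once the two derivative identities are in hand.

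The main obstacle is the boundary. Collecting all boundary integrals, I would be left with $\int_\Sigma f\langle\omega,i_N d\omega\rangle$, $\int_\Sigma f\langle i_N\omega,\delta\omega\rangle$, $\int_\Sigma f\langle\nabla_N\omega,\omega\rangle$, and the first-order pieces $\int_\Sigma f_N|\omega|^2$ and $\int_\Sigma\langle i_N\omega,i_{\nabla f}\omega\rangle$ (all up to signs dictated by the inner normal). To reach the stated boundary expression I would decompose each form along $\Sigma$ into its tangential and normal parts, $\omega|_\Sigma=J^*\omega+N^\flat\wedge i_N\omega$, and rewrite the ambient operators $\nabla_N$, $i_N d$, $J^*\delta$ restricted to $\Sigma$ in terms of the intrinsic operators $d^\Sigma,\delta^\Sigma$ and the shape operator, using the Gauss--Weingarten equations $\nabla^M_XN=SX$ and $\nabla^M_XY=\nabla^\Sigma_XY-\langle SX,Y\rangle N$ for $X,Y$ tangent to $\Sigma$. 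Differentiating $N$ is exactly what feeds the shape operator into the computation: the resulting contractions organize themselves into $\langle S^{[p]}(J^*\omega),J^*\omega\rangle$, $-\langle S^{[p-1]}(i_N\omega),i_N\omega\rangle$ and the trace term $nH|i_N\omega|^2$, giving $\mathcal{B}(\omega,\omega)$, while the mixed tangential--normal contractions collect into $2f\langle\delta^\Sigma(J^*\omega),i_N\omega\rangle$. The weight contributes the term $-f_N|J^*\omega|^2$, the spurious $f_N|i_N\omega|^2$ from $\int_\Sigma f_N|\omega|^2$ being cancelled by the normal part of $\int_\Sigma\langle i_N\omega,i_{\nabla f}\omega\rangle$ after splitting $\nabla f=f_NN+\nabla^\Sigma f$. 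This bookkeeping, which rests on the form-decomposition and the $S^{[k]}$ calculus of Definition~\ref{def-2.1} and Examples~\ref{exa-2.1}--\ref{exa-2.2}, is the delicate and error-prone heart of the proof, since it requires tracking every sign coming from the inner normal orientation and every curvature term coming from the non-commutativity of $\nabla^M$ and $\nabla^\Sigma$.
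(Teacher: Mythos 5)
Your proposal is correct and follows essentially the same route as the paper: a weighted integration by parts of the Bochner--Weitzenb\"ock identity, with the interior $\nabla f$-terms organized via the identity $d(i_{\nabla f}\omega)=-i_{\nabla f}d\omega+\nabla_{\nabla f}\omega+\nabla^2 f(\omega)$ (which you obtain from Cartan's formula and the Lie-derivative relation, while the paper proves it by a direct frame computation in Proposition~\ref{prop-computation}), and the boundary terms converted to $\mathcal{B}(\omega,\omega)$ and $2f\langle\delta^\Sigma(J^*\omega),i_N\omega\rangle$ through the Gauss--Weingarten decomposition, which is exactly the content of the Raulot--Savo identities \eqref{eq3.5}--\eqref{eq3.6} that the paper cites. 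The only cosmetic difference is that the paper starts from $\tfrac12\Delta(f|\omega|^2)$ rather than pairing $\Delta\omega$ with $f\omega$ directly; the resulting bookkeeping is the same.
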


\begin{rem}
When taking $\omega=du$ for a smooth function $u\in C^\infty(M)$, we recover a special case in \cite{QX15}. Namely, for two smooth functions $f$ and $u$ on $M$, there holds
\begin{align}\label{xeq0}
&\int_M f\left((\Delta u)^2-|\nabla^2 u|^2\right)dv\nonumber\\
=&\int_\Sigma f\left(2 u_N\Delta_\Sigma u+nH(u_N)^2+h(\nabla_\Sigma u, \nabla_\Sigma u)\right)da\nonumber \\
&{}-\int_\Sigma   f_N\, |\nabla_\Sigma u|^2  da +\int_M \left(\nabla^2 f +\Delta f   g+ f \mathrm{Ric}_g\right)(\nabla u, \nabla u)dv.
\end{align}
\end{rem}

\begin{rem}
When $f=1$, we recover the formula from \cite{RS11} (also cf. Theorems~2.1.5 and 2.1.7 in \cite{Sch95}). That is, for a differential form $\omega\in \Omega^p(M)$ of the degree $p$
there holds
\begin{align}\label{eq-RS}
&\int_M  |d\omega |^2+ |\delta \omega |^2 dv=\int_M  |\nabla \omega  |^2+\langle W_{M}^{[p]}(\omega),\omega\rangle dv\nonumber \\
&+2\int_\Sigma \langle \delta^\Sigma (J^*\omega),i_N \omega \rangle da+\int_\Sigma \mathcal{B}(\omega,\omega) da,
\end{align}
where $\mathcal{B}(\omega,\omega)$ is given in \eqref{eq-B}.
\end{rem}
The proof of Theorem~\ref{thm-Reilly} starts with the standard Bochner formula for differential forms. Then we integrate an identity resulting from the Bochner formula and employ multiple times the Stokes formula. After suitably arranging the terms both in the interior integral and in the boundary integral, we arrive at the demanded formula.

Next we discuss the applications of the weighted Reilly formula. As the first application, we get a sharp lower bound for the first non-zero eigenvalue of a Steklov eigenvalue problem for differential forms. In the literature, roughly speaking, there are three types of the Steklov eigenvalue problems (or the Dirichlet-to-Neumann maps) for differential forms. One is introduced and studied by Belishev and Sharafutdinov \cite{BS08} and Karpukhin \cite{Kar19}. A second one, the complete Dirichlet-to-Neumann map, is by Joshi and Lionheart \cite{JL05} and Sharafutdinov and Shonkwiler \cite{SS13}. (Indeed, this one determines the previous one.) Another one is by Raulot and Savo \cite{RS12} (cf. Carron's relative to absolute operator \cite{Car02}). In this paper we are mainly concerned with the one in \cite{BS08,Kar19}. Precisely, the problem may be described as follows. Given an integer $0\leq p\leq n$ and $\phi \in \Omega^p(\Sigma)$, there exists a solution $\varphi\in \Omega^p(M)$ satisfying
\begin{align}\label{eq-DN}
\begin{cases}
\Delta \varphi=0,\ \delta \varphi  =0\text{ on }M,\\
J^*\varphi =\phi\text{ on }\Sigma.
\end{cases}
\end{align}
Then the Dirichlet-to-Neumann operator $\mathcal{D}^{[p]}$ is defined by
\begin{equation*}
\mathcal{D}^{[p]}(\phi):=-i_Nd\varphi\in \Omega^p(\Sigma).
\end{equation*}
In order to study the spectral properties of $\mathcal{D}^{[p]}$, it is advisable to restrict $\mathcal{D}^{[p]}$ to the subspace of co-closed $p$-forms in $\Omega^p(\Sigma)$ (see \cite{Kar19}). In that case the restricted operator admits a sequence of eigenvalues
\begin{align*}
0\leq \sigma_1^{[p]}(\Sigma)\leq \sigma_2^{[p]}(\Sigma)\leq \dots \nearrow \infty.
\end{align*}

When $p=0$, the above problem reduces to the classical Steklov eigenvalue problem for functions. This classical problem was introduced by W.~Steklov around 1900 (\cite{Ste02,KKK14}) and since then has been attracting considerable attention from mathematicians. For an account of its developments, we refer to the nice surveys \cite{GP17,CGGS23}. In addition, for $p=0$ the results similar to those in the following Theorems~\ref{thm-bound} and \ref{thm-comparison} have been obtained in \cite{XX23}. So in the applications below we only consider the case $1\leq p\leq n$. We prove the following result.
\begin{thm}\label{thm-bound}
Let $(M^{n+1},g)$ be an $(n+1)$-dimensional smooth compact connected oriented Riemannian manifold with boundary $\Sigma=\pt M$ and $1\leq p\leq n$ an integer. Assume that the $(p+1)$st Weitzenb\"{o}ck curvature and the sectional curvature of $M$ are non-negative, and the principal curvatures of $\Sigma$ in $M$ are no less than $c>0$. Denote by $\sigma$ the first non-zero eigenvalue of the Dirichlet-to-Neumann operator $\mathcal{D}^{[p]}$. Then
\begin{equation*}
\sigma\geq (p+1)c.
\end{equation*}
The equality holds for a Euclidean ball with the radius $1/c$. Conversely, if the equality holds, then there exists a non-trivial co-closed $p$-form $\varphi\in \Omega^p(M)$ (``non-trivial'' means that $\varphi$ is not a harmonic field with the Dirichlet boundary condition) such that
\begin{align}\label{eq-rigidity}
\begin{cases}
\nabla d\varphi=0\text{ on }M,\\
-i_Nd\varphi=(p+1)c J^*\varphi\text{ on }\Sigma.
\end{cases}
\end{align}
\end{thm}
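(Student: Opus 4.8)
The plan is to test the weighted Reilly formula (Theorem~\ref{thm-Reilly}) against the exact $(p+1)$-form built from a first eigenform. Let $\phi\in\Omega^p(\Sigma)$ be a co-closed eigenform, $\mathcal{D}^{[p]}\phi=\sigma\phi$, and let $\varphi\in\Omega^p(M)$ be its harmonic extension, so that $\Delta\varphi=0$, $\delta\varphi=0$, $J^*\varphi=\phi$ and $-i_N d\varphi=\sigma\phi$ on $\Sigma$. I set $\omega:=d\varphi$. Since $\Delta\varphi=0$ and $\delta\varphi=0$ give $\delta d\varphi=0$, the form $\omega$ is a harmonic $(p+1)$-form: $d\omega=0$ and $\delta\omega=0$. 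Its boundary data are $J^*\omega=d^\Sigma\phi$ and $i_N\omega=-\sigma\phi$, and since $\phi$ is co-closed, $\delta^\Sigma(J^*\omega)=\delta^\Sigma d^\Sigma\phi=\Delta_\Sigma\phi$. Integrating $\int_M\langle d\varphi,d\varphi\rangle$ by parts (using $\delta d\varphi=0$) also records the Rayleigh identity $\int_M|\omega|^2\,dv=\sigma\int_\Sigma|\phi|^2\,da$, which will later feed the interior terms.

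Next I apply Theorem~\ref{thm-Reilly} to $\omega$ with a weight $f$ to be chosen. Because $\omega$ is closed the term $\int_M 2\langle\omega,i_{\nabla f}(d\omega)\rangle\,dv$ drops, and because $\omega$ is harmonic the left-hand side reduces to $-\int_M f|\nabla\omega|^2\,dv$. The three hypotheses enter the remaining terms as follows. The assumption $W_M^{[p+1]}\ge 0$ fixes the sign of the Weitzenb\"ock term $\int_M f\langle W_M^{[p+1]}(\omega),\omega\rangle\,dv$. On the boundary, writing the eigenvalues of $S^{[k]}$ as $k$-fold sums of principal curvatures, the bound $\kappa_i\ge c$ yields $\langle S^{[p+1]}(d^\Sigma\phi),d^\Sigma\phi\rangle\ge (p+1)c\,|d^\Sigma\phi|^2$ and $nH|\phi|^2-\langle S^{[p]}(\phi),\phi\rangle\ge (n-p)c\,|\phi|^2$, which is exactly where the factor $(p+1)c$ is produced inside $\mathcal{B}(\omega,\omega)$ (see \eqref{eq-B}); meanwhile the cross term contributes $2\int_\Sigma f\langle\Delta_\Sigma\phi,-\sigma\phi\rangle\,da$, carrying the coefficient $-2\sigma$. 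The purpose of the weight is to supply, through $\langle\omega,\nabla^2 f(\omega)\rangle+\Delta f\,|\omega|^2$, an interior contribution that combines with the Rayleigh identity; here the nonnegative sectional curvature is used to construct $f$ (as the solution of an auxiliary elliptic problem with prescribed boundary data) obeying the sharp Hessian comparison $\nabla^2 f\le g$, an equality on the model ball $f=|x|^2/2$, together with the value of $f_N$ on $\Sigma$ needed to absorb the cross term.

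Assembling these estimates in the weighted formula and discarding the manifestly nonnegative terms is designed to yield an inequality of the shape $0\le (\text{positive constant})\,(\sigma-(p+1)c)\int_\Sigma(\cdots)\,da$, whence $\sigma\ge(p+1)c$. Equality forces equality at every step: $\nabla\omega=0$, i.e. $\nabla d\varphi=0$; the Weitzenb\"ock and interior curvature contributions vanish; and the principal-curvature inequalities are saturated, so $\Sigma$ is totally umbilic with all $\kappa_i\equiv c$. Combined with the eigenvalue relation this gives \eqref{eq-rigidity}, and one checks that on the Euclidean ball of radius $1/c$ the extremal forms (those with $d\varphi$ parallel) realize $\sigma=(p+1)c$. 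I expect the main obstacle to be the second step: constructing $f$ and establishing the sharp Hessian estimate $\nabla^2 f\le g$ from the sectional curvature hypothesis, and then carrying out the bookkeeping so that the boundary cross term, which carries the ``wrong'' coefficient $2\sigma$, is cancelled exactly by the weighted interior and $f_N$ contributions. Without this cancellation the unweighted formula \eqref{eq-RS} applied to the same $\omega$ gives only $(2\sigma-(p+1)c)\int_\Sigma|d^\Sigma\phi|^2\,da\ge \sigma^2(n-p)c\int_\Sigma|\phi|^2\,da$, hence merely the non-sharp bound $\sigma>(p+1)c/2$, which is why the weight is essential.
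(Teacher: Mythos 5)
Your setup ($\omega=d\varphi$, $\delta d\varphi=0$, the Rayleigh identity $\int_M|d\varphi|^2\,dv=\sigma\int_\Sigma|\phi|^2\,da$, and the diagnosis that the unweighted formula \eqref{eq-RS} only yields $\sigma>(p+1)c/2$) is all correct and matches the paper. But the step you yourself flag as the ``main obstacle'' --- constructing $f$ so that the cross term $2\int_\Sigma f\langle\delta^\Sigma(J^*\omega),i_N\omega\rangle\,da$ with its coefficient $-2\sigma$ is cancelled by the $f_N$ and interior contributions --- is a genuine gap, and the paper does not resolve it by any such cancellation. Instead it chooses the weight $f_\varepsilon$ to be a smoothing of $\rho-\tfrac{c}{2}\rho^2$, where $\rho=\mathrm{dist}(\cdot,\Sigma)$ (Proposition~\ref{prop-approximation}); this satisfies $f_\varepsilon=0$ and $\nabla f_\varepsilon=N$ on $\Sigma$, so \emph{every} $f$-weighted boundary term in Theorem~\ref{thm-Reilly} vanishes (in particular $\mathcal{B}$ and the cross term never appear), and only $-\int_\Sigma f_N|J^*d\varphi|^2\,da$ survives. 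The hypothesis on the principal curvatures enters solely through the Hessian comparison $\nabla^2(-f_\varepsilon)\geq(c-\varepsilon)g$ for the distance function, not through the boundary term $\mathcal{B}$; your plan to extract $(p+1)c$ from $\langle S^{[p+1]}(d^\Sigma\phi),d^\Sigma\phi\rangle$ is therefore not where the constant comes from, and your conclusion in the equality case that $\Sigma$ must be totally umbilic is neither claimed by the theorem nor obtainable this way.

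The second missing ingredient is the Pohozhaev-type identity (Theorem~\ref{prop-Pohozhaev}), which your proposal never invokes but which is indispensable. The weighted Reilly formula with the above $f_\varepsilon$ produces
\begin{equation*}
\int_\Sigma |J^*d\varphi|^2\,da=\int_M\bigl(\langle d\varphi,\nabla^2 f_\varepsilon(d\varphi)\rangle+\Delta f_\varepsilon|d\varphi|^2\bigr)dv+\int_M f_\varepsilon\bigl(|\nabla d\varphi|^2+\langle W_M^{[p+1]}(d\varphi),d\varphi\rangle\bigr)dv,
\end{equation*}
whose left-hand side $\int_\Sigma|d^\Sigma\phi|^2\,da$ is not expressible in terms of $\sigma$ alone (this identity instead yields Theorem~\ref{thm-comparison}). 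Applying the Pohozhaev identity with $F=\nabla f_\varepsilon$ and combining it with the display above trades $\int_\Sigma|J^*d\varphi|^2\,da$ for $\int_\Sigma|i_Nd\varphi|^2\,da=\sigma^2\int_\Sigma|\phi|^2\,da$ and reverses the sign on the Hessian term, giving $\sigma^2\geq(p+1)c\,\sigma$ after the Hessian comparison and $\varepsilon\to0$. Without this second identity your scheme has no mechanism for producing the $\sigma^2$ needed to close the argument, so the proposal as written does not prove the theorem.
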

\begin{rem}
  For $p=0$, the analogous result as in Theorem~\ref{thm-bound} is related to the well-known Escobar's conjecture \cite[p.~115]{Esc99}. The Escobar's conjecture is stated for Riemannian manifolds with non-negative Ricci curvature; the author joint with Chao~Xia \cite{XX23} has confirmed this conjecture for Riemannian manifolds with non-negative sectional curvature. See \cite{Esc97,Esc99,XX23} for more details.
\end{rem}
\begin{rem}
  For the Dirichlet-to-Neumann operator $\mathcal{T}^{[p]}$ investigated by Raulot and Savo (see Section~\ref{sec2.2} for some introductions), they have obtained lower bounds for the first non-zero eigenvalue $\nu_1^{[p]}$ (\cite[Thm.~1]{RS12}), which are sharp in the range $(n+1)/2\leq p\leq n$. On the one hand, we note that by Karpukhin's \cite[Thm.~2.5]{Kar19} there holds  $\nu_1^{[p]}\leq \sigma^{[p]}_1$ for $0\leq p\leq n-1$ (in our Theorem~\ref{thm-bound} $\sigma=\sigma_1^{[p]}$). On the other hand, we emphasize that in Theorem~\ref{thm-bound} we assume $W_{M}^{[p+1]}\geq 0$, while in Raulot and Savo's \cite[Thm.~1]{RS12}, they assume $W_{M}^{[p]}\geq 0$.
\end{rem}
\begin{rem}
  Using the Hodge star operator $*$, we see $*W_M^{[p]}=W_M^{[n+1-p]}*$ on $\Omega^p(M)$, and so $W_M^{[p]}\geq 0$ if and only if $W_M^{[n+1-p]}\geq 0$. We also know that $W_M^{[1]}=\mathrm{Ric}$ and that $W_M^{[2]}\geq 0$ implies $\mathrm{Ric}\geq 0$ (see e.g. Sec.~2 in \cite{Kar17} or Sec.~9.4.5 in \cite{Pet16}). By virtue of these facts, the assumption $W_{M}^{[p+1]}\geq 0$ in Theorem~\ref{thm-bound} when $p=n-1,n$ is redundant, since the sectional curvature of $M$ is assumed to be non-negative.
\end{rem}
\begin{rem}
It would be of interest to see whether under the conditions of Theorem~\ref{thm-bound} the equation \eqref{eq-rigidity} implies that the manifold $M$ is isometric to a Euclidean ball with the radius $1/c$. This is an Obata-type problem (\cite{Oba62}). Currently we do not know the answer for the general case. The corollaries~10.110 and 10.111 in \cite{Bes08} and the paper \cite{Ver11} might provide some hints. Nevertheless, the case $p=n$ is quite special; in this case one can get the rigidity result as in \cite[Thm.~2]{RS12}.
\end{rem}
\begin{rem}
See the last Section~\ref{sec6} for a non-sharp lower bound of $\sigma$ and a related open problem.
\end{rem}
For the proof of Theorem~\ref{thm-bound}, we mainly utilize two ingredients, the weighted Reilly formula in Theorem~\ref{thm-Reilly} and a stimulating Pohozhaev-type identity (Proposition~\ref{prop-Pohozhaev} below) from \cite{GKLP22}. In \cite{GKLP22}, among others, A.~Girouard, M.~Karpukhin, M.~Levitin, and I.~Polterovich exploit the use of Pohozhaev-type identities on eigenvalue estimates and spectral asymptotics. This kind of applications of Pohozhaev-type identities, as also in \cite{CGH20,HS20,PS19,Xio18}, proves very powerful and may date back to Pohozhaev's paper \cite{Poh65}, or even further to a rediscovered manuscript \cite{Hor18} written by L.~H\"{o}rmander in 1950s. Besides, in the proof of Theorem~\ref{thm-bound} another key point worth mentioning is to choose a suitable weight function in the weighted Reilly formula and a suitable vector field in the Pohozhaev-type identity, the idea of which comes from the author's joint work \cite{XX23} with Chao~Xia.

As the second application, we obtain a comparison result between the spectrum of the Dirichlet-to-Neumann operator $\mathcal{D}^{[p]}$ and the spectrum of the Hodge Laplacian $\Delta_\Sigma$ on the boundary $\Sigma$ restricted to the co-closed differential forms.
\begin{thm}\label{thm-comparison}
Assumptions are the same as in Theorem~\ref{thm-bound} except we suppose $1\leq p\leq n-1$. Denote by $\sigma_k^{[p]}$ ($k\geq 1$) the eigenvalues of the operator $\mathcal{D}^{[p]}$ and by $\lambda_k^{[p]}$ ($k\geq 1$) the eigenvalues of the Hodge Laplacian $\Delta_\Sigma$, both operators being restricted to the co-closed differential forms. Then
\begin{align*}
\sigma^{[p]}_k&\leq \frac{1}{(n-p)c} \lambda_k^{[p]},\quad k\geq 1.
\end{align*}
When $M$ is a Euclidean ball with the radius $1/c$, the equality is attained for $1\leq k\leq C_{n+1}^{p+1}$.
\end{thm}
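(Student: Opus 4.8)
The plan is to reduce the comparison to a single quadratic-form inequality and then invoke the min--max principle. First I would record the variational characterizations. For a co-closed $\phi\in\Omega^p(\Sigma)$ with harmonic co-closed extension $\varphi$ solving \eqref{eq-DN}, integration by parts together with $\delta\varphi=0$ and $\delta d\varphi=\Delta\varphi=0$ gives $\langle \mathcal D^{[p]}\phi,\phi\rangle_{L^2(\Sigma)}=\int_M|d\varphi|^2\,dv$, so that
\[
\sigma^{[p]}_k=\min_{\dim V=k}\ \max_{0\neq\phi\in V}\frac{\int_M|d\varphi|^2\,dv}{\int_\Sigma|\phi|^2\,da},\qquad
\lambda^{[p]}_k=\min_{\dim W=k}\ \max_{0\neq\phi\in W}\frac{\int_\Sigma|d_\Sigma\phi|^2\,da}{\int_\Sigma|\phi|^2\,da},
\]
both minima taken over $k$-dimensional spaces of co-closed $p$-forms, where $\int_\Sigma|d_\Sigma\phi|^2=\langle\Delta_\Sigma\phi,\phi\rangle$ since $\delta_\Sigma\phi=0$. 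Granting the key inequality
\begin{equation}\label{eq-ast}
(n-p)c\int_M|d\varphi|^2\,dv\ \le\ \int_\Sigma|d_\Sigma\phi|^2\,da\qquad\text{for every co-closed }\phi,
\end{equation}
I would finish by using the span of the first $k$ co-closed eigenforms of $\Delta_\Sigma$ as a test space for $\sigma^{[p]}_k$; then \eqref{eq-ast} and the Rayleigh characterization of $\lambda^{[p]}_k$ immediately yield $\sigma^{[p]}_k\le\frac1{(n-p)c}\lambda^{[p]}_k$.

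Everything therefore rests on \eqref{eq-ast}. Here I would set $\eta:=d\varphi\in\Omega^{p+1}(M)$, which is closed and co-closed (hence harmonic) and satisfies $J^*\eta=d_\Sigma\phi$ and $i_N\eta=-\mathcal D^{[p]}\phi$. Applying Theorem~\ref{thm-Reilly} to $\eta$, with a weight $f$ adapted to the geometry in the spirit of \cite{XX23}, the interior contributions are controlled from below using $W_M^{[p+1]}\ge0$ and $|\nabla\eta|^2\ge0$, while the decisive boundary term is $\mathcal B(\eta,\eta)$. Since the principal curvatures are $\ge c$, the operator $S^{[p+1]}$ acting on $J^*\eta$ has all eigenvalues $\ge (p+1)c$, and the complementary operator $nH\,\mathrm{Id}-S^{[p]}$ acting on $i_N\eta$ has eigenvalues equal to sums of the $n-p$ remaining principal curvatures, hence $\ge (n-p)c$; together these give the pointwise estimate
\[
\mathcal B(\eta,\eta)\ \ge\ (p+1)c\,|J^*\eta|^2+(n-p)c\,|i_N\eta|^2,
\]
which is exactly the source of the factor $(n-p)c$ in \eqref{eq-ast}.

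The hard part is to extract the \emph{sharp} constant. The naive choice $f\equiv1$ collapses the Reilly identity to a purely boundary relation in which $\mathcal B(\eta,\eta)$ is balanced against the cross term $\int_\Sigma\langle\delta^\Sigma(J^*\eta),i_N\eta\rangle$; estimating that cross term crudely loses a factor of two and produces only a non-sharp inequality (cf.\ the remark pointing to Section~\ref{sec6}). To recover the sharp constant I expect to combine the weighted Reilly formula with the Pohozhaev-type identity of Proposition~\ref{prop-Pohozhaev}, choosing the weight function and the Pohozhaev vector field simultaneously --- as in the author's work with Xia --- so that the bulk term reconstructs a positive multiple of $\int_M|\eta|^2=\int_M|d\varphi|^2$ while the indefinite cross term is absorbed; the non-negativity of the sectional curvature is what guarantees the required pointwise control of the Hessian of $f$ (equivalently, of the covariant derivative of the vector field) throughout $M$. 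This weight/vector-field construction is the genuine obstacle, the curvature and shape-operator estimates above being then bookkeeping.

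Finally I would settle the equality case. On the Euclidean ball of radius $1/c$ the space of parallel (constant-coefficient) $(p+1)$-forms has dimension $C_{n+1}^{p+1}=\binom{n+1}{p+1}$; each such $\eta$ is harmonic and exact, $\eta=d\varphi$ with $\varphi$ co-closed, and for it $\nabla\eta=0$, $W_M^{[p+1]}=0$, and all principal curvatures equal $c$, so every inequality used to prove \eqref{eq-ast} becomes an equality. These $\varphi$ supply $\binom{n+1}{p+1}$ mutually orthogonal extremal forms whose boundary restrictions $\phi=J^*\varphi$ exhaust, by a direct computation on the round sphere, the first $\binom{n+1}{p+1}$ co-closed eigenforms of $\Delta_\Sigma$; by the min--max characterizations these simultaneously realize the first $\binom{n+1}{p+1}$ eigenvalues of both $\mathcal D^{[p]}$ and $\Delta_\Sigma$, forcing $\sigma^{[p]}_k=\frac1{(n-p)c}\lambda^{[p]}_k$ for $1\le k\le C_{n+1}^{p+1}$.
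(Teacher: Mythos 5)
Your reduction of the theorem to the single inequality $(n-p)c\int_M|d\varphi|^2\,dv\le\int_\Sigma|d^\Sigma\phi|^2\,da$, followed by min--max with the span of the first $k$ co-closed eigenforms of $\Delta_\Sigma$ as a test space, is exactly the paper's structure (this is inequality \eqref{eq-5.2} there, since $J^*d\varphi=d^\Sigma\phi$), and your treatment of the equality case on the ball is consistent with Examples~\ref{exam1} and \ref{exam3}. The gap is in the one step you explicitly defer: the proof of the key inequality itself. Your sketch locates the factor $(n-p)c$ in the boundary term $\mathcal{B}(\eta,\eta)$, via the estimate $nH\,|i_N\eta|^2-\langle S^{[p]}i_N\eta,i_N\eta\rangle\ge(n-p)c\,|i_N\eta|^2$, and proposes to rescue sharpness by coupling the weighted Reilly formula with the Pohozhaev identity \eqref{eq-Poh}. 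That is not where the constant comes from, and the Pohozhaev identity plays no role in this theorem (it is needed only for Theorem~\ref{thm-bound}). In the paper's argument the weight is $f_\varepsilon$ from Proposition~\ref{prop-approximation}, which \emph{vanishes on $\Sigma$} and has $\nabla f_\varepsilon=N$ there; consequently every boundary term of Theorem~\ref{thm-Reilly} involving $f$ --- in particular the entire $\mathcal{B}$ term and the $\delta^\Sigma$ cross term --- disappears, and the only surviving boundary contribution is $-\int_\Sigma f_N|J^*d\varphi|^2=-\int_\Sigma|d^\Sigma\phi|^2$. The Reilly formula thus becomes the identity \eqref{eq-comparison} expressing $\int_\Sigma|d^\Sigma\phi|^2$ purely through bulk integrals, and the factor $(n-p)c$ arises from the pointwise interior estimate
\begin{equation*}
\Delta f_\varepsilon\,|d\varphi|^2+\langle d\varphi,\nabla^2 f_\varepsilon(d\varphi)\rangle=\sum_{i_1<\cdots<i_{p+1}}\Bigl(\sum_{i\neq i_1,\dots,i_{p+1}}\gamma_i\Bigr)\bigl((d\varphi)_{i_1\dots i_{p+1}}\bigr)^2\ \ge\ (n-p)(c-\varepsilon)\,|d\varphi|^2,
\end{equation*}
where $\gamma_i\ge c-\varepsilon$ are the eigenvalues of $-\nabla^2 f_\varepsilon$ guaranteed by Proposition~\ref{prop-approximation} (the count $n-p=(n+1)-(p+1)$ of complementary indices is what produces the constant). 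Your route via $\mathcal{B}$ with $f\equiv1$ is the one the paper uses for the \emph{non-sharp} Theorem~\ref{thm-non-sharp} from \eqref{eq-RS}, and as you yourself note it cannot yield the sharp constant; since you never specify the weight, never observe that it kills the boundary curvature terms, and never produce the Hessian estimate above, the decisive inequality remains unproved in your proposal.
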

\begin{rem}
  In the literature, there are other types of interesting results comparing the Steklov eigenvalues on the manifold $M$ and the Laplacian eigenvalues on the boundary $\Sigma$. See e.g. \cite{CGH20,Esc99,GKLP22,Kar17,Kar19,Kwo16,RS12,PS19,WX09,Xio18,XX23,YY17}.
\end{rem}
For the proof of Theorem~\ref{thm-comparison}, we mainly use the weighted Reilly formula in Theorem~\ref{thm-Reilly} with a suitably chosen weight function as mentioned before, and the variational characterizations for $\sigma_k^{[p]}$ and $\lambda_k^{[p]}$.

The paper is structured as follows. In Section~\ref{sec2} we present some preliminaries on the differential forms, the involved eigenvalue problems, a Pohozhaev-type identity, and a weight function together with its approximation. In Section~\ref{sec3} we prove the weighted Reilly formula for differential forms. In Sections~\ref{sec4} and \ref{sec5} we give the proofs of Theorems~\ref{thm-bound} and \ref{thm-comparison} respectively. In the last section we discuss a non-sharp lower bound for the first non-zero eigenvalue of $\mathcal{D}^{[p]}$ and a related problem.

\begin{ack}
The author wishes to thank Ben Andrews, Alexandre Girouard, Han Hong, Michael Levitin, Haizhong Li, Martin Li, Iosif Polterovich, Simon Raulot for their interest. In particular, he is grateful to Han Hong for a question which results in the appearance of Section~\ref{sec6}. This research was supported by National Key R and D Program of China 2021YFA1001800 and NSFC (Grant no. 12171334).
\end{ack}

\section{Preliminaries}\label{sec2}

In this section we collect some preparatory materials which are needed in the proofs of our results, including differential forms, relevant eigenvalue problems on differential forms, a Pohozhaev-type identity in the setting of differential forms, and the construction of a specific weight function.

\subsection{Differential forms}

Let $(M^{n+1},g)$ be an $(n+1)$-dimensional smooth compact connected oriented Riemannian manifold with boundary $\Sigma=\pt M$. Let $N$ be the inner unit normal vector field along $\Sigma$. Let $J:\Sigma \rightarrow M$ be the inclusion mapping. Let $\Omega^p(M)$ be the space of sections of the vector bundle $\Lambda^p(T^*M)$ of smooth exterior differential forms of the degree $p$. Let $d$ and $\delta$ be the differential and co-differential operators on $\Omega^p(M)$, respectively. Let $\mathcal{H}^p(M)$, $\mathcal{E}^p(M)$, $c\mathcal{E}^p(M)$, $\mathcal{C}^p(M)$, and $c\mathcal{C}^p(M)$ denote respectively the subspace in $\Omega^p(M)$ of harmonic fields (i.e., closed and co-closed forms), exact forms, co-exact forms, closed forms, and co-closed forms. Similarly we can define spaces and operators on the boundary $\Sigma=\pt M$, e.g., $\Omega^p(\Sigma)$, $d^\Sigma$, $\delta^\Sigma$. We use $\langle \cdot,\cdot \rangle$ and $|\cdot|$ to designate the pointwise inner product and the norm respectively on the differential forms induced by the Riemannian metric $g$, and use $dv$ and $da$ to denote the volume element and area element on $M$ and $\Sigma$, respectively. First we recall the Stokes formula, which will be used frequently later.
\begin{prop}
For any two differential forms $\varphi, \psi \in \Omega^p(M)$, there holds
\begin{equation}
\int_M \langle d\varphi, \psi\rangle dv =\int_M \langle \varphi, \delta \psi\rangle dv -\int_{\Sigma} \langle J^* \varphi, i_N\psi\rangle da.
\end{equation}
\end{prop}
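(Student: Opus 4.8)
The plan is to obtain the identity as an integration-by-parts (Green-type) formula, deduced from the classical Stokes theorem $\int_M d\alpha=\int_{\Sigma}\alpha$ applied to a suitably chosen $n$-form $\alpha$ on the $(n+1)$-dimensional manifold $M$. Note first that for the pairings $\langle d\varphi,\psi\rangle$ and $\langle\varphi,\delta\psi\rangle$ to be defined one must read $\psi$ as a form of degree $p+1$, so that $d\varphi$ and $\psi$ both have degree $p+1$ while $J^*\varphi$ and $i_N\psi$ are both $p$-forms on $\Sigma$; I prove the formula in this form. The single idea is to encode both the differential and the codifferential through the Hodge star operator $*$ associated to $g$ and the orientation, and then let Stokes' theorem produce the boundary term automatically.

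First I would form the $n$-form $\alpha:=\varphi\wedge *\psi$ on $M$ and expand its exterior derivative by the Leibniz rule, $d(\varphi\wedge *\psi)=d\varphi\wedge *\psi+(-1)^p\,\varphi\wedge d(*\psi)$. Using the two pointwise identities $\langle d\varphi,\psi\rangle\,dv=d\varphi\wedge *\psi$ and $\langle\varphi,\delta\psi\rangle\,dv=\varphi\wedge *\delta\psi$ for forms of equal degree, together with the normalization convention relating $\delta$ to $*d*$ and the rule $**=(-1)^{k(m-k)}$ on $k$-forms (with $m=n+1$), I would verify that the two interior contributions assemble, after the relevant signs cancel, into the single identity $d(\varphi\wedge *\psi)=\bigl(\langle d\varphi,\psi\rangle-\langle\varphi,\delta\psi\rangle\bigr)\,dv$.

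Next I would integrate this over $M$ and apply Stokes' theorem, which converts the left-hand side into the boundary integral $\int_\Sigma J^*(\varphi\wedge *\psi)=\int_\Sigma J^*\varphi\wedge J^*(*\psi)$. It then remains to identify the boundary integrand with $-\langle J^*\varphi,i_N\psi\rangle\,da$. To do so I would decompose $\psi$ along $\Sigma$ into its tangential and normal parts and invoke the compatibility between the ambient Hodge star and the boundary Hodge star $*_\Sigma$, namely a relation of the shape $J^*(*\psi)=\pm\, *_\Sigma(i_N\psi)$; combined with $J^*\varphi\wedge *_\Sigma(i_N\psi)=\langle J^*\varphi,i_N\psi\rangle\,da$, this yields the claimed boundary contribution.

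The main obstacle is not conceptual but the careful bookkeeping of orientation and signs in this last step. Two independent sign sources must be tracked: the normalization of $\delta$ through $*d*$, which controls the interior cancellation above, and the fact that $N$ is the \emph{inner} unit normal, so that the boundary orientation induced by Stokes' theorem is opposite to the one adapted to $N$. This second flip is precisely what produces the minus sign in front of $\int_\Sigma\langle J^*\varphi,i_N\psi\rangle\,da$. Once these signs are reconciled the identity follows; since everything is local, the cleanest verification is to compute in a boundary-adapted orthonormal frame in which $N$ is the last frame vector and the tangential frame spans $T\Sigma$.
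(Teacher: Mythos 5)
Your proposal is correct. Note, though, that the paper does not actually prove this proposition: it is recalled as a standard fact (the adjointness of $d$ and $\delta$ with boundary term, as in Schwarz's book cited as \cite{Sch95}), so there is no in-paper argument to compare against. What you give is precisely the textbook derivation: write $\langle d\varphi,\psi\rangle\,dv-\langle\varphi,\delta\psi\rangle\,dv$ as $d(\varphi\wedge *\psi)$ via the Leibniz rule and the relation $\delta=\pm *d*$, apply Stokes' theorem, and identify $J^*(\varphi\wedge *\psi)$ with $\pm\langle J^*\varphi,i_N\psi\rangle\,da$ using $J^*(*\psi)=\pm *_\Sigma(i_N\psi)$ and the orientation conventions. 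You are also right to flag the degree bookkeeping: as literally stated the proposition has a mismatch (if $\varphi,\psi$ are both $p$-forms, $\langle d\varphi,\psi\rangle$ is undefined), and your reading of $\psi$ as a $(p+1)$-form is the one consistent with how the formula is actually invoked in Section~3 of the paper (e.g.\ with $\varphi=f\omega$ and $\psi=d\omega$); the paper's own remark that $i_N\psi\in\Omega^{p-1}(\Sigma)$ corresponds to the equivalent shift $\varphi\in\Omega^{p-1}(M)$, $\psi\in\Omega^{p}(M)$. The only caveat is that you outline rather than execute the sign verification in the last step; since the inner normal $N$ reverses the Stokes-induced boundary orientation, that is exactly where the minus sign originates, and your plan correctly isolates it. Carrying out that computation in a boundary-adapted orthonormal frame, as you suggest, closes the argument.
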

\begin{rem}
Here $i_N\psi$ is viewed as an element in $\Omega^{p-1}(\Sigma)$.
\end{rem}
Recall that the Hodge Laplacian on differential forms is given by
\begin{equation}
\Delta \omega =d\delta \omega+\delta d\omega,\ \omega \in \Omega^p(M).
\end{equation}
Then we have the classical Bochner formula
\begin{equation}
\Delta \omega =\nabla^*\nabla \omega+W_{M}^{[p]}(\omega),
\end{equation}
where $\nabla^*\nabla$ denotes the connection Laplacian on differential forms and $W_{M}^{[p]}:\Omega^p(M)\to \Omega^p(M)$ is the Weitzenb\"{o}ck curvature operator given by (see e.g. \cite[Thm.~9.4.1]{Pet16})
\begin{align*}
&W_M^{[p]}(\omega)(X_1,X_2,\dots,X_p)\\
=\sum_{i,j}(Rm(e_i,X_j)&(\omega))(X_1,X_2,\dots,X_{j-1},e_i,X_{j+1},\dots, X_p),\ \{X_k\}_{k=1}^p\subset \mathfrak{X}(M).
\end{align*}
Here $Rm(X,Y)=\nabla_X\nabla_Y-\nabla_Y\nabla_X-\nabla_{[X,Y]}:\Omega^p(M)\to \Omega^p(M)$ denotes the induced Riemann curvature operator acting on the differential forms and $\{e_i\}_{i=1}^{n+1}$ is a local orthonormal frame for the space $\mathfrak{X}(M)$ of smooth vector fields. By Gallot and Meyer's work \cite{GM75}, if the eigenvalues of the curvature operator are bounded below by $\gamma\in \R$, then
\begin{equation*}
\langle W_M^{[p]}(\omega),\omega\rangle \geq p(n+1-p)\gamma |\omega|^2,\quad \omega \in \Omega^p(M).
\end{equation*}
\begin{rem}
In our convention note that both $\Delta$ and $\nabla^*\nabla$ are non-negative operators. Thus, for a smooth function $f$ and an orthonormal frame $\{e_i\}_{i=1}^{n+1}$, we get
\begin{equation}
\Delta f=-\sum_{i=1}^{n+1} f_{ii}.
\end{equation}
\end{rem}

On the boundary $\Sigma=\pt M$, we use $S:\mathfrak{X}(\Sigma)\to \mathfrak{X}(\Sigma)$ to denote the Weingarten operator (shape operator) with respect to $N$, i.e., $S(X)=-\nabla_X N$ for $X\in \mathfrak{X}(\Sigma)$. Then $H=\mathrm{tr}_g S/n$ is the mean curvature of the boundary.

Next we define the operation on differential forms for a $(1,1)$ tensor as follows.
\begin{defn}\label{def-2.1}
Given a $(1,1)$ tensor $T$ on a Riemannian manifold $(\widetilde{M}^m,g)$, it induces an operator $T^{[p]}:\Omega^p(\widetilde{M})\to \Omega^p(\widetilde{M})$, $0\leq p\leq m$, by
\begin{equation*}
(T^{[p]}(\omega))(X_1,X_2,\dots,X_p):=\sum_{i=1}^p\omega(X_1,X_2,\dots,T(X_i),\dots,X_p),\quad \omega\in \Omega^p(\widetilde{M}).
\end{equation*}
By convention, we set $T^{[0]}=0$.
\end{defn}
Here we discuss two examples.
\begin{exam}\label{exa-2.1}
The Weingarten operator $S$ induces an operator $S^{[p]}:\Omega^p(\Sigma)\to \Omega^p(\Sigma)$, $0\leq p\leq n$. The eigenvalues of $S^{[p]}$ are called the $p$-curvatures of the boundary $\Sigma$. In other words, let $\{\kappa_i\}_{i=1}^n$ be the principal curvatures of $S$ in the non-decreasing order. Then for any $1\leq i_1<i_2<\cdots<i_p\leq n$, the sum
\begin{equation*}
\kappa_{i_1}+\kappa_{i_2}+\cdots+\kappa_{i_p}
\end{equation*}
is a $p$-curvature of $\Sigma$. Define the lowest $p$-curvature as
\begin{equation*}
\sigma_p(x)=\kappa_{1}+\kappa_{2}+\cdots+\kappa_{p},
\end{equation*}
and set $\sigma_p(\Sigma)=\inf_{x\in \Sigma}\sigma_p(x)$. We say that $\Sigma$ is $p$-convex if $\sigma(\Sigma)\geq 0$. In particular, the $1$-convexity is the usual convexity and the $n$-convexity is the mean convexity. Moreover, we see $\sigma_q/q\geq \sigma_p/p$ if $q\geq p$.
\end{exam}

\begin{exam}\label{exa-2.2}
  Given a Lipschitz vector field $F$ on $M$, then $\nabla F$ is a $(1,1)$ tensor and it induces an operator $\nabla F^{[p]}:\Omega^p(M)\to \Omega^p(M)$ on the $p$-differential forms. In particular, if $F=\nabla f$ is given by the gradient vector field of a smooth function $f\in C^\infty(M)$, then we may view its Hessian $(1,1)$ tensor $\nabla^2 f^{[p]}=\nabla (\nabla f)^{[p]}$ as an operator on $\Omega^p(M)$. We will use $\nabla F$ for $\nabla F^{[p]}$ when the context is clear.
\end{exam}

\subsection{Eigenvalue problems}\label{sec2.2}
In this subsection we discuss three types of eigenvalue problems for differential forms. First we consider the following Steklov eigenvalue problem for differential forms, which was introduced in \cite{BS08, Kar19}. This eigenvalue problem has some motivation relating to the investigations of the Maxwell's equations in mathematical physics. Given $\phi \in \Omega^p(\Sigma)$, there exists a solution $\varphi\in \Omega^p(M)$ satisfying (see \cite[Lemma~3.4.7]{Sch95})
\begin{align}\label{eq-DN}
\begin{cases}
\Delta \varphi=0,\ \delta \varphi  =0\text{ on }M,\\
J^*\varphi =\phi\text{ on }\Sigma;
\end{cases}
\end{align}
the solution is unique up to an element of $\mathcal{H}_D^p(M)$, the space of harmonic fields with the Dirichlet boundary condition. For definiteness, we may fix the solution $\varphi$ such that it is orthogonal to $\mathcal{H}_D^p(M)$ (see Proposition~3.11 in \cite{Kar19}). Then the Dirichlet-to-Neumann operator $\mathcal{D}^{[p]}$ is defined by
\begin{equation*}
\mathcal{D}^{[p]}(\phi):=-i_Nd\varphi\in \Omega^p(\Sigma).
\end{equation*}
By \cite{Kar19}, the followings hold true.
\begin{thm}
The operator $\mathcal{D}^{[p]}:\Omega^p(\Sigma)\to \Omega^p(\Sigma)$ is well-defined and self-adjoint. Moreover, it admits the following properties.
\begin{enumerate}
  \item There hold $\mathrm{ker}(\mathcal{D}^{[p]})=J^*\mathcal{H}^p(M)$ and $\mathcal{E}^p(\Sigma)\subset \mathrm{ker}( \mathcal{D}^{[p]})$.
  \item The restriction on the space of co-closed forms $\mathcal{D}^{[p]}:c\mathcal{C}^p(\Sigma)\to c\mathcal{C}^p(\Sigma)$ is an operator with a compact resolvent. The eigenvalues of the restriction consist of a sequence
      \begin{align*}
0\leq \sigma_1^{[p]}(\Sigma)\leq \sigma_2^{[p]}(\Sigma)\leq \dots \nearrow \infty,
\end{align*}
with the account of multiplicities.
  \item The eigenvalues satisfy the variational principle
  \begin{align}
\sigma_k^{[p]}(\Sigma)=\inf_{E_k }\sup_{\varphi \in E_k\setminus \{0\}}\frac{\int_M  |d  \varphi |^2 dv}{\int_\Sigma |J^*\varphi |^2 da},
\end{align}
where $E_k$ ranges over $k$-dimensional subspaces in $\Omega^p(M)$ satisfying $J^*E_k\subset c\mathcal{C}^p(\Sigma)$.
\end{enumerate}
\end{thm}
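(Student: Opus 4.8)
The plan is to treat $\mathcal{D}^{[p]}$ as the quadratic-form operator attached to the Dirichlet energy and to read off each property from the integration-by-parts identity supplied by the Stokes formula.

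\textbf{Well-definedness and self-adjointness.} First I would show the Neumann datum $-i_N d\varphi$ does not depend on the chosen solution of \eqref{eq-DN}. If $\psi$ satisfies $\Delta\psi=0$, $\delta\psi=0$ and $J^*\psi=0$, then applying the Stokes formula with $\alpha=\psi$ and $\beta=d\psi$ gives $\int_M|d\psi|^2\,dv=\int_M\langle\psi,\delta d\psi\rangle\,dv-\int_\Sigma\langle J^*\psi,i_Nd\psi\rangle\,da=0$, since $\delta d\psi=\Delta\psi=0$ and $J^*\psi=0$; hence $d\psi=0$ and $i_Nd\psi=0$. Thus $\mathcal{D}^{[p]}$ is well-defined, and the difference of two solutions lies in $\mathcal{H}_D^p(M)$. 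For the symmetric structure I would let $\varphi_1,\varphi_2$ solve \eqref{eq-DN} with data $\phi_1,\phi_2$ and apply Stokes to obtain $\int_M\langle d\varphi_1,d\varphi_2\rangle\,dv=-\int_\Sigma\langle J^*\varphi_1,i_Nd\varphi_2\rangle\,da=\int_\Sigma\langle\phi_1,\mathcal{D}^{[p]}\phi_2\rangle\,da$, using $\delta d\varphi_2=\Delta\varphi_2=0$. The left side is symmetric in the two indices, so $\mathcal{D}^{[p]}$ is self-adjoint; setting $\phi_1=\phi_2=\phi$ yields the non-negative quadratic form $\langle\mathcal{D}^{[p]}\phi,\phi\rangle_{L^2(\Sigma)}=\int_M|d\varphi|^2\,dv$.

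\textbf{Kernel and invariance of $c\mathcal{C}^p(\Sigma)$.} From this quadratic form, $\mathcal{D}^{[p]}\phi=0$ forces $d\varphi=0$, so $\varphi\in\mathcal{H}^p(M)$ and $\phi\in J^*\mathcal{H}^p(M)$; the reverse inclusion is immediate, giving $\ker\mathcal{D}^{[p]}=J^*\mathcal{H}^p(M)$. To place the exact forms in the kernel I would, given $\phi=d^\Sigma\theta$, solve the one-degree-lower problem \eqref{eq-DN} for $\xi\in\Omega^{p-1}(M)$ with $J^*\xi=\theta$ and set $\varphi=d\xi$: then $d\varphi=0$, $\delta\varphi=\delta d\xi=\Delta\xi=0$, $\Delta\varphi=0$, and $J^*\varphi=d^\Sigma\theta=\phi$, so $\mathcal{D}^{[p]}\phi=-i_Nd\varphi=0$, whence $\mathcal{E}^p(\Sigma)\subset\ker\mathcal{D}^{[p]}$. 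Finally, self-adjointness forces $\mathrm{Im}\,\mathcal{D}^{[p]}\perp\ker\mathcal{D}^{[p]}\supset\mathcal{E}^p(\Sigma)$, and since the Hodge decomposition on the closed manifold $\Sigma$ identifies $(\mathcal{E}^p(\Sigma))^\perp$ with $c\mathcal{C}^p(\Sigma)$, the image, and hence the restriction, lands in $c\mathcal{C}^p(\Sigma)$; so the restricted operator is a well-defined self-adjoint endomorphism of $c\mathcal{C}^p(\Sigma)$.

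\textbf{Discreteness and the variational principle.} The heart of the matter---and the step I expect to be the main obstacle---is the discreteness of the spectrum, that is, the compactness of the resolvent. Here I would identify $\mathcal{D}^{[p]}$ as an elliptic, self-adjoint, non-negative classical pseudodifferential operator of order one on the closed manifold $\Sigma$, its principal symbol being that of $\sqrt{\Delta_\Sigma}$ on the relevant symbol subspace, so that standard elliptic theory furnishes a compact resolvent and eigenvalues $\sigma_k^{[p]}(\Sigma)\nearrow\infty$ with finite multiplicities; alternatively one argues directly that the form domain embeds compactly into $L^2(\Sigma)$ via the trace map together with elliptic estimates for \eqref{eq-DN}. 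Verifying the ellipticity on the co-closed subspace and controlling the boundary regularity of \eqref{eq-DN} is the technical crux. Granting discreteness, the min-max theorem gives $\sigma_k^{[p]}(\Sigma)=\inf_{V_k}\sup_{\phi\in V_k\setminus\{0\}}\langle\mathcal{D}^{[p]}\phi,\phi\rangle_{L^2(\Sigma)}/\|\phi\|_{L^2(\Sigma)}^2$ over $k$-dimensional $V_k\subset c\mathcal{C}^p(\Sigma)$. To recast this over subspaces $E_k\subset\Omega^p(M)$ I would invoke the Dirichlet principle: for fixed data $J^*\omega=\phi$, the harmonic solution minimizes $\int_M|d\omega|^2\,dv$, because for any $\omega=\varphi+\psi$ with $J^*\psi=0$ the cross term $\int_M\langle d\varphi,d\psi\rangle\,dv$ vanishes by Stokes and $\delta d\varphi=0$. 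Hence replacing boundary subspaces by their extensions only raises the numerator, while choosing harmonic extensions realizes equality, so the min-max over $E_k$ with $J^*E_k\subset c\mathcal{C}^p(\Sigma)$ coincides with the stated value; the only care needed is to restrict to those $E_k$ on which $J^*$ is injective, which leaves the infimum unchanged.
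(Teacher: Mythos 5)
This theorem is not proved in the paper at all: it is quoted verbatim from Karpukhin \cite{Kar19} (``By \cite{Kar19}, the followings hold true''), so there is no in-paper argument to compare against. Your outline is essentially the standard argument given there, and the parts you actually carry out are correct: the Stokes-formula computation showing that a solution of \eqref{eq-DN} with $J^*\psi=0$ has $d\psi=0$ (hence well-definedness), the symmetry identity $\int_M\langle d\varphi_1,d\varphi_2\rangle\,dv=\int_\Sigma\langle\phi_1,\mathcal{D}^{[p]}\phi_2\rangle\,da$, the identification of the kernel with $J^*\mathcal{H}^p(M)$ via the vanishing of the quadratic form, the construction $\varphi=d\xi$ placing $\mathcal{E}^p(\Sigma)$ in the kernel, the orthogonality argument showing the image lies in $c\mathcal{C}^p(\Sigma)$, and the Dirichlet-principle reduction of the min--max over extensions $E_k$ to the min--max over boundary subspaces $V_k\subset c\mathcal{C}^p(\Sigma)$ (with the correct caveat about injectivity of $J^*$ on $E_k$).

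The one genuine gap is the compactness of the resolvent, which you flag as ``the technical crux'' but do not establish. Be aware that the naive route is blocked: $\mathcal{D}^{[p]}$ on all of $\Omega^p(\Sigma)$ is \emph{not} elliptic, precisely because its kernel contains the infinite-dimensional space $\mathcal{E}^p(\Sigma)$; this is the very reason the restriction to $c\mathcal{C}^p(\Sigma)$ is taken in the first place. So the claim that the principal symbol ``is that of $\sqrt{\Delta_\Sigma}$ on the relevant symbol subspace'' needs an actual proof, which in \cite{Kar19} goes through the Hodge decomposition $c\mathcal{C}^p(\Sigma)=c\mathcal{E}^p(\Sigma)\oplus\mathcal{H}^p(\Sigma)$ together with a comparison of $\mathcal{D}^{[p]}$ on co-exact forms with a genuinely elliptic Dirichlet-to-Neumann operator (of Raulot--Savo type), plus elliptic boundary regularity for \eqref{eq-DN}. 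Without some version of that argument, statement (2) — and hence the existence of the discrete sequence $\sigma_k^{[p]}$ to which the min--max principle in (3) applies — remains unproved. Since this theorem is background material cited from \cite{Kar19}, the appropriate fix here is simply to cite that reference for the compact-resolvent step rather than to reprove it.
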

To check the sharpness of our results, we present the spectrum of $\mathcal{D}^{[p]}$ for the unit Euclidean ball $B^{n+1}(1)$.
\begin{exam}[{\cite{GM75},\cite{IT78},\cite[Prop.~7]{RS14},\cite[Thm.~8.1]{Kar19}}]\label{exam1}
  Let $M=\overline{B^{n+1}(1)}$ be the unit Euclidean ball. Let $P_{l,p}$ be the space of homogeneous polynomial $p$-forms of the degree $l\geq 0$ in $\R^{n+1}$. Define three subspaces of $P_{l,p}$ as follows:
  \begin{align*}
  H_{l,p}&=\{\omega\in P_{l,p}|\Delta \omega =0,\ \delta \omega=0\},\\
  H'_{l,p}&=\{\omega\in H_{l,p}|d\omega=0\},\\
  H''_{l,p}&=\{\omega \in H_{l,p}|i_N\omega=0\text{ on }\SS^n\}.
  \end{align*}
  (i) Let $1\leq p\leq n-1$. Then $\mathcal{H}^p(\SS^n)=0$ and $\Omega^p(\SS^n)=\mathcal{E}^p(\SS^n)\oplus c\mathcal{E}^p(\SS^n)$. By \cite{IT78}, we know that $\mathcal{E}^p(\SS^n)=\bigoplus_l(J^*H'_{l,p})$, $c\mathcal{E}^p(\SS^n)=\bigoplus_l(J^*H''_{l,p})$, and
  \begin{equation*}
  \delta^{\SS^n}:J^*H'_{l,p}\to J^*H''_{l+1,p-1}
  \end{equation*}
  is an isomorphism. So $\mathrm{dim} J^* H''_{1,p}=\mathrm{dim} J^* H'_{0,p+1}=C_{n+1}^{p+1}$. By Theorem~8.1 in \cite{Kar19}, the spaces $J^* H'_{l-1,p}$ and $J^*H''_{l,p}$ for $l\geq 1$ form the eigenbases of $\mathcal{D}^{[p]}$. Precisely, if $\phi\in J^* H'_{l-1,p}$, then $\mathcal{D}^{[p]}\phi=0$; if $\phi\in J^* H''_{l,p}$, then $\mathcal{D}^{[p]}\phi=(p+l)\phi$. In particular, the first positive eigenvalue of $\mathcal{D}^{[p]}$ is $p+1$ with multiplicity $C_{n+1}^{p+1}$. (ii) Let $p=n$. If $\phi\in J^* H'_{l-1,n}$, then $\mathcal{D}^{[n]}\phi=0$; if $\phi$ is the volume form of $\SS^n$, then $\mathcal{D}^{[n]}\phi=(n+1)\phi$.
\end{exam}

Second, for the purpose of comparison, we introduce the Steklov eigenvalue problem investigated by Raulot and Savo in \cite{RS12}. Given a $p$-form $\phi\in \Omega^p(\Sigma)$ of the degree $p=0,1,\dots,n$, there exists a unique $p$-form $\varphi\in \Omega^p(M)$ satisfying (see \cite{Sch95})
\begin{align}\label{eq-DN1}
\begin{cases}
\Delta \varphi=0 \text{ on }M,\\
J^*\varphi =\phi,\ i_N\varphi=0 \text{ on }\Sigma.
\end{cases}
\end{align}
Then the Dirichlet-to-Neumann operator $\mathcal{T}^{[p]}:\Omega^p(\Sigma)\to \Omega^p(\Sigma)$ in \cite{RS12} is
\begin{equation}
\mathcal{T}^{[p]}(\phi)=-i_Nd\varphi.
\end{equation}
By \cite[Theorem~11]{RS12}, the operator $\mathcal{T}^{[p]}$ possesses the following properties.
\begin{thm}
  Let $(M^{n+1},g)$ be a compact Riemannian manifold with boundary $\Sigma=\pt M$. The followings hold true.
  \begin{enumerate}
    \item The operator $\mathcal{T}^{[p]}$ is non-negative and self-adjoint, and the kernel of $\mathcal{T}^{[p]}$ is $\mathrm{ker}(\mathcal{T}^{[p]})=J^*\mathcal{H}^p_N(M)$, where $\mathcal{H}^p_N(M)$ is the space of harmonic fields with the Neumann boundary condition.
    \item The operator $\mathcal{T}^{[p]}$ is an elliptic pseudo-differential operator of the order one, and so it admits a non-decreasing sequence of eigenvalues with finite multiplicities
        \begin{equation*}
        0\leq \nu^{[p]}_1(\Sigma)\leq \nu^{[p]}_2(\Sigma)\leq \cdots \nearrow \infty,
        \end{equation*}
        with the eigenvalue $0$ repeated $b_p(M)=\mathrm{dim} \mathcal{H}^p_N(M)$ (the $p$th Betti number) times.
    \item The eigenvalues satisfy the variational principle
  \begin{align}
\nu_k^{[p]}(\Sigma)=\inf_{F_k }\sup_{\varphi \in F_k\setminus \{0\}}\frac{\int_M  |d  \varphi |^2+|\delta \varphi|^2 dv}{\int_\Sigma  |J^*\varphi |^2 da},
\end{align}
where $F_k$ ranges over $k$-dimensional subspaces in $\Omega^p(M)$ with the Neumann boundary condition (i.e., any $\varphi\in F_k$ satisfies $i_N \varphi=0$).
  \end{enumerate}
\end{thm}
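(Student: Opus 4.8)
The plan is to reduce all three assertions to a single symmetric quadratic-form identity, derived from the Stokes formula stated above, and then to invoke the standard theory of elliptic boundary value problems for the remaining analytic claims. First I would fix $\phi,\psi\in\Omega^p(\Sigma)$ and let $\varphi,\widetilde\psi\in\Omega^p(M)$ be the extensions solving \eqref{eq-DN1}, so that $\Delta\varphi=\Delta\widetilde\psi=0$, $J^*\varphi=\phi$, $J^*\widetilde\psi=\psi$, and $i_N\varphi=i_N\widetilde\psi=0$. Applying the Stokes formula to the pairs $(\varphi,d\widetilde\psi)$ and $(\delta\widetilde\psi,\varphi)$ and adding them, the Neumann condition $i_N\varphi=0$ annihilates one boundary term while $\Delta\widetilde\psi=0$ kills the interior term, leaving
\begin{equation*}
\int_\Sigma \langle \mathcal{T}^{[p]}\psi,\phi\rangle\, da=\int_M \langle d\varphi,d\widetilde\psi\rangle+\langle \delta\varphi,\delta\widetilde\psi\rangle\, dv.
\end{equation*}
The right-hand side is symmetric in $(\varphi,\widetilde\psi)$, which gives the self-adjointness of $\mathcal{T}^{[p]}$; setting $\psi=\phi$ gives the non-negativity $\int_\Sigma\langle\mathcal{T}^{[p]}\phi,\phi\rangle\,da=\int_M |d\varphi|^2+|\delta\varphi|^2\,dv\geq 0$.

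The kernel is then immediate. If $\mathcal{T}^{[p]}\phi=0$, pairing with $\phi$ forces $d\varphi=\delta\varphi=0$, so $\varphi$ is a harmonic field; together with $i_N\varphi=0$ this places $\varphi\in\mathcal{H}^p_N(M)$, whence $\phi=J^*\varphi\in J^*\mathcal{H}^p_N(M)$. Conversely every $\varphi\in\mathcal{H}^p_N(M)$ is closed, so $i_Nd\varphi=0$ and $J^*\varphi\in\ker\mathcal{T}^{[p]}$. Thus $\ker\mathcal{T}^{[p]}=J^*\mathcal{H}^p_N(M)$, and Hodge theory identifies $\dim\mathcal{H}^p_N(M)=b_p(M)$, accounting for the multiplicity of the zero eigenvalue.

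The heart of the matter, and the step I expect to be the main obstacle, is that $\mathcal{T}^{[p]}$ is an elliptic pseudo-differential operator of order one. Here I would use that $\mathcal{T}^{[p]}$ is a Dirichlet-to-Neumann map for the Hodge Laplacian under the boundary conditions \eqref{eq-DN1}: freezing coefficients at a boundary point and passing to the half-space model, one solves $\Delta\varphi=0$ with prescribed tangential part and vanishing normal part, then reads off the leading behaviour of $-i_Nd\varphi$ in the cotangent variable $\xi$, whose principal symbol is $|\xi|$ times a positive endomorphism, exactly elliptic of order one. The delicate points are the bookkeeping of tangential versus normal components of the forms, absent in the scalar case, and the well-posedness of \eqref{eq-DN1}, which rests on the Hodge--Morrey--Friedrichs decomposition with the Neumann boundary condition. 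Granting ellipticity, the compactness of $\Sigma$ and the compact Sobolev embedding produce a compact resolvent, so the spectrum is discrete, of finite multiplicity, and accumulates only at $+\infty$.

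Finally, the variational principle (3) follows from the Courant--Fischer min-max theorem applied to the self-adjoint operator $\mathcal{T}^{[p]}$ with compact resolvent. The only point needing care is that the quotient in the statement ranges over arbitrary Neumann forms $\varphi$ with $i_N\varphi=0$, whereas the operator's Rayleigh quotient uses the harmonic extensions of the trace. The two agree because the extension solving \eqref{eq-DN1} minimizes the energy $\int_M|d\varphi|^2+|\delta\varphi|^2\,dv$ among all Neumann forms with the same trace $J^*\varphi$; hence the supremum over a $k$-dimensional subspace of Neumann forms is controlled by $\mathcal{T}^{[p]}$, and minimizing over such subspaces reproduces the stated formula for $\nu_k^{[p]}(\Sigma)$.
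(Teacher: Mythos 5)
The paper does not actually prove this theorem: it is quoted from Raulot and Savo, with the proof deferred entirely to \cite[Thm.~11]{RS12}, so there is no in-paper argument to compare against. Your proposal is essentially a correct reconstruction of the standard proof (and of the route taken in \cite{RS12}): the symmetric Dirichlet-form identity
\begin{equation*}
\int_\Sigma \langle \mathcal{T}^{[p]}\psi,\phi\rangle\, da=\int_M \langle d\varphi,d\widetilde\psi\rangle+\langle \delta\varphi,\delta\widetilde\psi\rangle\, dv
\end{equation*}
does follow from the two applications of the Stokes formula you describe (the interior terms assemble into $\int_M\langle\varphi,\Delta\widetilde\psi\rangle=0$ and the condition $i_N\varphi=0$ kills the remaining boundary term), and it correctly yields symmetry, non-negativity, and the kernel. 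Your reduction of the min-max principle to the fact that the solution of \eqref{eq-DN1} minimizes $\int_M|d\varphi|^2+|\delta\varphi|^2\,dv$ among Neumann forms with the same tangential trace is also sound; the minimizing property itself is the same orthogonality computation, since the difference of two competitors vanishes entirely on $\Sigma$ and so both boundary terms in the cross term drop.

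Two spots are left thinner than they should be. First, the stated multiplicity of the eigenvalue $0$ is $\dim\mathcal{H}^p_N(M)$, whereas your kernel identification gives $\dim J^*\mathcal{H}^p_N(M)$; you need the extra observation that $J^*$ is injective on $\mathcal{H}^p_N(M)$ (a harmonic field with $J^*\varphi=0$ and $i_N\varphi=0$ vanishes on $\Sigma$ and hence on $M$ by unique continuation). Second, the ellipticity claim remains a sketch: the frozen-coefficient computation giving principal symbol $|\xi|\cdot\mathrm{Id}$ is the right start (the normal component of the model solution vanishes identically, so the symbol really is scalar), but upgrading this to the statement that $\mathcal{T}^{[p]}$ \emph{is} a pseudodifferential operator requires the parametrix construction for the boundary value problem \eqref{eq-DN1}. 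You correctly identify this as the main analytic burden; it is exactly the content the paper outsources to the citation.
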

We also present the spectrum of the operator $\mathcal{T}^{[p]}$ for the unit Euclidean ball.
\begin{exam}[{\cite{GM75},\cite{IT78},\cite[Prop.~7]{RS14},\cite[Thm.~8.1]{Kar19}}]\label{exam2}
  Let $M=\overline{B^{n+1}(1)}$ be the unit Euclidean ball. (i) Let $1\leq p\leq n-1$. The spaces $J^* H'_{l-1,p}$ and $J^*H''_{l,p}$ for $l\geq 1$ form the eigenbases of $\mathcal{T}^{[p]}$. Precisely, if $\phi\in J^* H'_{l-1,p}$, then \begin{equation*}
  \mathcal{T}^{[p]}\phi=\frac{(l+p-1)(n+2l+1)}{(n+2l-1)}\phi;
  \end{equation*}
  if $\phi\in J^* H''_{l,p}$, then $\mathcal{T}^{[p]}\phi=(p+l)\phi$. (ii) Let $p=n$. If $\phi\in J^* H'_{l-1,n}$, then \begin{equation*}
  \mathcal{T}^{[n]}\phi=\frac{(l+n-1)(n+2l+1)}{(n+2l-1)}\phi;
  \end{equation*}
  if $\phi$ is the volume form of $\SS^n$, then $\mathcal{T}^{[n]}\phi=(n+1)\phi$.
\end{exam}

Last we consider the boundary Hodge Laplacian $\Delta_\Sigma$ restricted to the space of co-closed differential forms
\begin{align*}
\Delta_\Sigma^{[p]}:c\mathcal{C}^p(\Sigma)\to c\mathcal{C}^p(\Sigma).
\end{align*}
This operator is a non-negative self-adjoint elliptic operator with eigenvalues
\begin{align*}
0\leq \lambda_1^{[p]}(\Sigma)\leq \lambda_2^{[p]}(\Sigma)\leq \dots \nearrow \infty.
\end{align*}
The eigenvalue $\lambda_k^{[p]}(\Sigma)$ admits the variational principle
\begin{align}
\lambda_k^{[p]}(\Sigma)=\inf_{G_k\subset c\mathcal{C}^p(\Sigma)}\sup_{\phi \in G_k\setminus \{0\}}\frac{\int_\Sigma  |d^\Sigma \phi |^2 da}{\int_\Sigma  |\phi |^2 da},
\end{align}
where $G_k$ ranges over $k$-dimensional subspaces of $c\mathcal{C}^p(\Sigma)$.

We describe the spectrum of the operator $\Delta_\Sigma^{[p]}$ on the whole space $\Omega^p(\Sigma)$ for the unit Euclidean ball as well.
\begin{exam}[{\cite{GM75},\cite{IT78},\cite[Prop.~7]{RS14},\cite[Thm.~8.1]{Kar19}}]\label{exam3}
  Let $M=\overline{B^{n+1}(1)}$ be the unit Euclidean ball. (i) Let $1\leq p\leq n-1$. The spaces $J^* H'_{l-1,p}$ and $J^*H''_{l,p}$ for $l\geq 1$ form the eigenbases of $\Delta_\Sigma^{[p]}$. Precisely, if $\phi\in J^* H'_{l-1,p}$, then
  \begin{equation*}
  \Delta_\Sigma^{[p]}\phi= (l+p-1)(n+l-p) \phi;
  \end{equation*}
  if $\phi\in J^* H''_{l,p}$, then
    \begin{equation*}
  \Delta_\Sigma^{[p]}\phi= (l+p)(n+l-p-1) \phi.
  \end{equation*}
  (ii) Let $p=n$. If $\phi\in J^* H'_{l-1,n}$, then
  \begin{equation*}
  \Delta_\Sigma^{[n]}\phi= (l+n-1)l \phi;
  \end{equation*}
  if $\phi$ is the volume form of $\SS^n$, then $\Delta_\Sigma^{[n]}\phi=0$.
\end{exam}

\subsection{The Pohozhaev-type identity}

For our main result we shall use the following Pohozhaev-type identity. This identity is essentially Theorem~5.2 in \cite{GKLP22}. We state it in our required form and include its short proof for readers' convenience.
\begin{thm}[{\cite[Thm.~5.2]{GKLP22}}]\label{prop-Pohozhaev}
Let $(M^{n+1},g)$ be a smooth compact connected oriented Riemannian manifold with boundary $\Sigma=\pt M$. Let $F$ be a Lipschitz vector field on $M$, and $\varphi\in \Omega^p(M)$ a differential form. Then
  \begin{align}\label{eq-Poh}
  &\int_M |d\varphi|^2 \mathrm{div} F dv=-\int_\Sigma |d\varphi|^2 \langle F,N\rangle da-2\int_M \langle i_F d\varphi,\delta d\varphi\rangle dv\nonumber \\
  &+2\int_\Sigma \langle J^* i_F d\varphi, i_N d\varphi\rangle da+2\int_M \langle \nabla F(d\varphi),d\varphi\rangle dv.
  \end{align}
\end{thm}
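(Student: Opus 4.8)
The plan is to reduce the identity to the divergence theorem together with a single integration by parts, the bridge between them being a Cartan-type relation between the Lie derivative and the covariant derivative. Throughout I write $\eta=d\varphi\in\Omega^{p+1}(M)$, so that in particular $\eta$ is closed, $d\eta=0$.

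First I would apply the divergence theorem to the vector field $|\eta|^2F$. The Leibniz rule gives $\mathrm{div}(|\eta|^2F)=|\eta|^2\,\mathrm{div}\,F+2\langle\nabla_F\eta,\eta\rangle$, and since $N$ is the \emph{inner} unit normal the divergence theorem reads $\int_M\mathrm{div}(|\eta|^2F)\,dv=-\int_\Sigma|\eta|^2\langle F,N\rangle\,da$. This already produces the boundary term $-\int_\Sigma|d\varphi|^2\langle F,N\rangle\,da$ and reduces the claim to rewriting $-2\int_M\langle\nabla_F\eta,\eta\rangle\,dv$ as the remaining three terms on the right-hand side of \eqref{eq-Poh}.

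The crucial step is the pointwise identity $\nabla_F\eta=\mathcal{L}_F\eta-(\nabla F)^{[p+1]}(\eta)$, where $\mathcal{L}_F$ is the Lie derivative and $(\nabla F)^{[p+1]}$ is the operator of Definition~\ref{def-2.1} associated with the $(1,1)$-tensor $\nabla F$. I would verify this by observing that $\mathcal{L}_F-\nabla_F$ is a degree-zero derivation that vanishes on functions, hence is algebraic and determined by its action on $1$-forms; on a $1$-form $\alpha$ the torsion-free condition $[F,X]=\nabla_FX-\nabla_XF$ yields $(\mathcal{L}_F\alpha-\nabla_F\alpha)(X)=\alpha(\nabla_XF)$, which is exactly $(\nabla F)^{[1]}\alpha$, and the general degree follows from the derivation property. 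Substituting this identity isolates the interior term $+2\int_M\langle\nabla F(d\varphi),d\varphi\rangle\,dv$ and leaves $-2\int_M\langle\mathcal{L}_F\eta,\eta\rangle\,dv$ to be handled.

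For the latter, Cartan's formula $\mathcal{L}_F\eta=d\,i_F\eta+i_F\,d\eta$ together with $d\eta=0$ collapses to $\mathcal{L}_F\eta=d(i_Fd\varphi)$, and the Stokes formula of Section~\ref{sec2}, applied with the pair $i_Fd\varphi$ and $d\varphi$, gives $\int_M\langle d(i_Fd\varphi),d\varphi\rangle\,dv=\int_M\langle i_Fd\varphi,\delta d\varphi\rangle\,dv-\int_\Sigma\langle J^*(i_Fd\varphi),i_Nd\varphi\rangle\,da$. Multiplying by $-2$ produces precisely the remaining interior term $-2\int_M\langle i_Fd\varphi,\delta d\varphi\rangle\,dv$ and boundary term $+2\int_\Sigma\langle J^*i_Fd\varphi,i_Nd\varphi\rangle\,da$, which assembles the full identity. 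The main obstacle I anticipate is twofold: keeping the sign conventions straight (the inner-normal orientation in both the divergence and Stokes formulas), and, more substantively, justifying the computation for a merely Lipschitz field $F$, for which $\nabla F$ exists only almost everywhere. The latter I would dispatch by a routine smooth approximation of $F$ followed by passage to the limit, the uniformly bounded a.e. derivatives making all the interior integrals stable.
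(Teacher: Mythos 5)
Your proposal is correct and follows essentially the same route as the paper: expand $\mathrm{div}(|d\varphi|^2F)$, replace $\nabla_F d\varphi$ by $d(i_Fd\varphi)-\nabla F(d\varphi)$, and apply the Stokes formula. The only cosmetic difference is that you obtain this replacement from Cartan's formula together with $\mathcal{L}_F=\nabla_F+\nabla F$, whereas the paper proves the equivalent identity $d(i_F\omega)=-i_F(d\omega)+\nabla_F\omega+\nabla F(\omega)$ (Proposition~\ref{prop-computation}) directly in a geodesic frame.
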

\begin{proof}
  We start with the equality
  \begin{equation*}
  |d\varphi|^2\mathrm{div}F=\mathrm{div}(|d\varphi|^2 F)-\nabla_F|d\varphi|^2=\mathrm{div}(|d\varphi|^2 F)-2\langle \nabla_F d\varphi,d\varphi\rangle.
  \end{equation*}
  Next we use the computation result in Proposition~\ref{prop-computation} below
  \begin{align*}
  d(i_F\omega)=-i_F(d\omega)+\nabla_F\omega+\nabla F(\omega).
  \end{align*}
  So we may replace $\nabla_F d\varphi=di_Fd\varphi-\nabla F(d\varphi)$ to obtain
  \begin{equation*}
  |d\varphi|^2\mathrm{div}F= \mathrm{div}(|d\varphi|^2 F)-2\langle di_Fd\varphi,d\varphi\rangle+2\langle \nabla F(d\varphi),d\varphi\rangle.
  \end{equation*}
  Then integrating it over $M$ and using Stokes formula, we find
  \begin{align*}
  &\int_M |d\varphi|^2 \mathrm{div} Fdv=-\int_\Sigma |d\varphi|^2 \langle F,N\rangle da-2\int_M \langle i_F d\varphi,\delta d\varphi\rangle dv\\
  &+2\int_\Sigma \langle J^* i_F d\varphi, i_N d\varphi\rangle da +2\int_M \langle \nabla F(d\varphi),d\varphi\rangle dv,
  \end{align*}
  which is the conclusion.
\end{proof}

\subsection{The weight function and its approximation}

In the proofs of our eigenvalue estimates we also need a suitably chosen weight function $f_\varepsilon$.

Define the distance function to the boundary $\Sigma$ by
\begin{equation*}
\rho=\rho(x)={\rm dist}(x,  \Sigma).
\end{equation*}
The distance function $\rho$ is smooth away from the cut locus ${\rm Cut}(\Sigma)$ of $\Sigma$. Recall that ${\rm Cut}(\Sigma)$ is defined to be the set of all cut points and a cut point is the first point on a normal geodesic initiating from the boundary $\Sigma$ at which this geodesic fails to minimize  uniquely for the distance function $\rho$. In other words, for $x\in \Sigma$, consider the arc-length parametrized geodesic $\gamma_x(t)=\exp_x(tN(x))$ ($t\geq 0$). Then
$\gamma_x(t_0)\in {\rm Cut}(\Sigma)$ for
\begin{equation*}
t_0=t_0(x)=\sup\{t>0:\mathrm{dist}(\gamma_x(t),\Sigma)=t\}.
\end{equation*}

The set ${\rm Cut}(\Sigma)$ is known to have zero $(n+1)$-dimensional Hausdorff measure; see e.g. \cite[Thm.~B]{IT01}. In addition, if the Ricci curvature of $M$ is non-negative and the mean curvature $H$ of $\Sigma$ satisfies $H\geq c>0$, we have
\begin{eqnarray}\label{dist-max}
\rho_{\max}=\max_{ M} \rho\leq \frac{1}{c}.
\end{eqnarray}
See e.g. \cite{Li14}.

Next we define a weight function as
\begin{align}\label{f-def}
f(x)=f(\rho(x))=\rho(x)-\frac{c}{2}\rho(x)^2.
\end{align}
By the Hessian comparison theorem and a smoothing approximation, we have the following result.
\begin{prop}[\cite{XX23}]\label{prop-approximation}
Assume that the sectional curvature of $M$ is non-negative and the second fundamental form $h$ of the boundary satisfies $h\geq cg_\Sigma$ for some constant $c>0$. Fix any neighborhood $\cal{C}$ of ${\rm Cut}(\Sigma)$ in $M$. Then for any $\varepsilon>0$, there exists a smooth nonnegative function ${f}_\varepsilon$ on $M$ such that $f_\varepsilon=f$ on $M\setminus \cal{C}$ and
\begin{equation}
\nabla^2 (-{f}_\varepsilon)\geq (c-\varepsilon)g.
\end{equation}
Moreover, $f_\varepsilon$ converges uniformly to $f$ on $M$ as $\varepsilon \to 0$.
\end{prop}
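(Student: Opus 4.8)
The plan is to prove the estimate first on the open set $M\setminus\mathrm{Cut}(\Sigma)$, where $\rho$ and hence $f$ are smooth, by a direct Hessian computation combined with the Hessian (Riccati) comparison theorem, and then to remove the singular behaviour along $\mathrm{Cut}(\Sigma)$ by a convexity-preserving smoothing supported in the prescribed neighbourhood $\mathcal{C}$. Writing $f=g(\rho)$ with $g(t)=t-\tfrac{c}{2}t^2$, so that $g'(t)=1-ct$ and $g''(t)=-c$, I would compute wherever $\rho$ is smooth
\[
\nabla^2(-f)=-g''(\rho)\,d\rho\otimes d\rho-g'(\rho)\,\nabla^2\rho=c\,d\rho\otimes d\rho-(1-c\rho)\,\nabla^2\rho .
\]
Since $\nabla^2\rho(\nabla\rho,\cdot)=0$ and $|\nabla\rho|=1$, this is block-diagonal with respect to the splitting $TM=\R\,\nabla\rho\oplus T\{\rho=\text{const}\}$; in the normal direction it equals exactly $c$, so it remains only to bound the tangential block $-(1-c\rho)\nabla^2\rho$ from below by $c\,g$.

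For the tangential block I would invoke the comparison theorem. Let $B=-\nabla^2\rho$ restricted to the level sets; along each normal geodesic $\gamma_x(t)=\exp_x(tN(x))$ it solves the Riccati equation $B'=B^2+R_{\gamma_x}$ with initial value $B(0)=S$, the Weingarten operator of $\Sigma$. The assumption $h\geq c\,g_\Sigma$ gives $B(0)\geq c\,\mathrm{Id}$, and nonnegative sectional curvature gives $R_{\gamma_x}\geq0$, so comparison with the scalar model $\underline B'=\underline B^2$, $\underline B(0)=c$, whose solution is $\underline B(t)=c/(1-ct)$, yields $B\geq \tfrac{c}{1-c\rho}\,\mathrm{Id}$ for $\rho<1/c$. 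Because nonnegative sectional curvature forces $\mathrm{Ric}\geq0$ and $h\geq c\,g_\Sigma$ forces $H\geq c$, the bound \eqref{dist-max} gives $\rho_{\max}\leq 1/c$, so $1-c\rho\geq0$ throughout and the tangential block satisfies $-(1-c\rho)\nabla^2\rho=(1-c\rho)B\geq c\,g$. Hence $\nabla^2(-f)\geq c\,g$ on $M\setminus\mathrm{Cut}(\Sigma)$; the same bound $\rho\leq 1/c$ shows $f=\rho(1-\tfrac{c}{2}\rho)\geq0$.

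The smoothing step, which I expect to be the main obstacle, rests on the following observation. On $[0,1/c]$ the profile $g$ is increasing, so near any cut point, where $\rho=\min_i\rho_i$ is the minimum of the finitely many smooth branch distances, one has $f=\min_i g(\rho_i)$ and therefore $-f=\max_i\bigl(-g(\rho_i)\bigr)$ is a \emph{maximum} of smooth functions, each obeying $\nabla^2(-g(\rho_i))\geq c\,g$ by the computation above. Thus $-f$ is uniformly convex in the barrier sense across $\mathrm{Cut}(\Sigma)$, and I would replace it by a smooth uniformly convex function through a regularized-maximum (or mollification) procedure localized inside $\mathcal{C}$: the regularization of functions with Hessian $\geq c\,g$ again has Hessian $\geq c\,g$ up to the distortion introduced by convolving in normal coordinate charts, which can be absorbed into an arbitrarily small loss $\varepsilon$ by shrinking the regularization scale. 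Keeping this scale small guarantees $f_\varepsilon=f$ outside $\mathcal{C}$, preserves $f_\varepsilon\geq0$, and yields the uniform convergence $f_\varepsilon\to f$ as $\varepsilon\to0$. The delicate point is carrying this out for a possibly non-generic cut locus while matching $f$ exactly on $M\setminus\mathcal{C}$; here one appeals to the Greene--Wu theory of smoothing convex functions, which permits leaving the function unchanged on the open set where it is already smooth.
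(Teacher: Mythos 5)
Your argument is essentially the paper's: the paper does not reprove this proposition but cites Proposition~3.3 of \cite{XX23}, which is obtained exactly as you describe --- the Hessian comparison theorem applied to $-g(\rho)$ on $M\setminus\mathrm{Cut}(\Sigma)$, followed by a Greene--Wu type smoothing localized in $\mathcal{C}$ that costs an arbitrarily small $\varepsilon$ in the Hessian lower bound. The only caveat is that near a cut point $\rho$ need not be the minimum of finitely many smooth branches (there may be infinitely many minimizing normal geodesics, or focal points), so the uniform convexity of $-f$ across $\mathrm{Cut}(\Sigma)$ should be stated in the barrier (support-function) sense; this is precisely what the Greene--Wu machinery you invoke is designed to handle.
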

The first part of Proposition~\ref{prop-approximation} is just Proposition~3.3 in \cite{XX23}. The uniform convergence is stated in the proof of Proposition~4.2 in \cite{XX23}.

\section{The weighted Reilly formula for differential forms}\label{sec3}

In this section, we give the proof of Theorem~\ref{thm-Reilly}. For simplicity we shall omit the volume and area elements in the integrals, and we also adopt the Einstein convention for summation on indices. First by the Bochner formula, we see
\begin{align*}
\frac{1}{2}\Delta (|\omega|^2)&=-\frac{1}{2}\langle \omega,\omega\rangle_{e_i e_i}=-\langle \nabla_{e_i}\omega,\omega\rangle_{e_i}\\
&=\langle \nabla^*_{e_i}\nabla_{e_i} \omega,\omega\rangle -\langle \nabla_{e_i}\omega,\nabla_{e_i}\omega\rangle\\
&=\langle \Delta \omega ,\omega\rangle-\langle W_{M}^{[p]}(\omega),\omega\rangle -|\nabla \omega|^2.
\end{align*}

Now we consider a smooth weight function $f\in C^\infty(M)$. Then we obtain
\begin{align*}
\frac{1}{2}\Delta(f|\omega|^2)&=\frac{1}{2}\Delta f\cdot |\omega|^2-\langle \omega,\omega\rangle_{\nabla f}+f\cdot \frac{1}{2}\Delta (|\omega|^2)\\
&=\frac{1}{2}\Delta f\cdot |\omega|^2-\langle \omega,\omega\rangle_{\nabla f}+f\left(\langle \Delta \omega ,\omega\rangle-\langle W_{M}^{[p]}(\omega),\omega\rangle -|\nabla \omega|^2\right).
\end{align*}
Integrating it over $M$, we get
\begin{align*}
\int_M \frac{1}{2}\Delta(f|\omega|^2)=&\int_M \left(\frac{1}{2}\Delta f\cdot |\omega|^2-\langle \omega,\omega\rangle_{\nabla f}-f\left( \langle W_{M}^{[p]}(\omega),\omega\rangle +|\nabla \omega|^2\right)\right)\\
&+\int_M f \langle \Delta \omega ,\omega\rangle.
\end{align*}
Next we consider $\int_M \langle \delta d \omega, f\omega \rangle$ and $\int_M \langle d\delta \omega, f\omega \rangle$ in $\int_M f \langle \Delta \omega ,\omega\rangle$ separately.

\subsection{The calculation of $\int_M \langle \delta d \omega, f\omega \rangle$.} First we deduce
\begin{align*}
\int_M \langle \delta d \omega, f\omega \rangle=&\int_M \langle   d \omega, df\wedge \omega+fd\omega \rangle+\int_\Sigma \langle i_N(d\omega),J^*(f\omega) \rangle \\
=&\int_M \langle    \omega, \delta(df\wedge \omega) \rangle-\int_\Sigma \langle J^*\omega , i_N (df\wedge \omega)\rangle\\
& +\int_M f|d\omega|^2+\int_\Sigma \langle i_N(d\omega),J^*(f\omega) \rangle.
\end{align*}
Next we expand the term $\delta (df\wedge \omega)$.
\begin{prop}\label{prop-delta}
We have
\begin{equation}
\delta (df\wedge \omega)=\Delta f \cdot \omega-\nabla_{\nabla f}\omega +\nabla^2 f(\omega)-df\wedge \delta \omega.
\end{equation}
\end{prop}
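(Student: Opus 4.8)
The plan is to compute $\delta(df\wedge\omega)$ directly from the first-order expression for the codifferential in a local orthonormal frame. Recall that in our sign convention (where $\Delta$ is non-negative) one has $\delta\alpha=-\sum_i i_{e_i}\nabla_{e_i}\alpha$ for every form $\alpha$, where $\{e_i\}$ is a local orthonormal frame which I would choose to be geodesic (normal) at the point of evaluation, so that $\nabla_{e_i}e_j=0$ there. Applying this to $\alpha=df\wedge\omega$ and using $\nabla_{e_i}(df\wedge\omega)=(\nabla_{e_i}df)\wedge\omega+df\wedge\nabla_{e_i}\omega$, the whole computation reduces to expanding $-\sum_i i_{e_i}$ of these two pieces by means of the graded Leibniz rule for the interior product, $i_X(\beta\wedge\gamma)=(i_X\beta)\wedge\gamma+(-1)^{\deg\beta}\beta\wedge(i_X\gamma)$.

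The first piece is $-\sum_i i_{e_i}\bigl[(\nabla_{e_i}df)\wedge\omega\bigr]$. Since $\nabla_{e_i}df$ is a $1$-form with $(\nabla_{e_i}df)(e_j)=\nabla^2 f(e_i,e_j)=f_{ij}$, the Leibniz rule splits it into two terms. The term in which $i_{e_i}$ hits $\nabla_{e_i}df$ contributes $-\sum_i f_{ii}\,\omega=\Delta f\cdot\omega$, using $\Delta f=-\sum_i f_{ii}$; the remaining term $+\sum_i(\nabla_{e_i}df)\wedge(i_{e_i}\omega)$ I would identify with $\nabla^2 f(\omega)$, the action of the induced operator $\nabla^2 f^{[p]}$ from Definition~\ref{def-2.1}. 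The second piece is $-\sum_i i_{e_i}\bigl[df\wedge\nabla_{e_i}\omega\bigr]$; here $i_{e_i}df=f_i$ is a scalar, so the Leibniz rule yields $-\sum_i f_i\,\nabla_{e_i}\omega+df\wedge\sum_i i_{e_i}\nabla_{e_i}\omega$. The first summand is exactly $-\nabla_{\nabla f}\omega$ since $\nabla f=\sum_i f_i e_i$, and the second is $-df\wedge\delta\omega$ because $\sum_i i_{e_i}\nabla_{e_i}\omega=-\delta\omega$. Adding the two pieces gives the asserted identity.

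The only step that is not purely mechanical is the identification $\sum_i(\nabla_{e_i}df)\wedge(i_{e_i}\omega)=\nabla^2 f(\omega)$, which is the main point to verify. I would check it on a decomposable basis form $\omega=e_{j_1}^*\wedge\cdots\wedge e_{j_p}^*$: writing $\nabla_{e_i}df=\sum_k f_{ik}e_k^*$ and expanding $i_{e_i}\omega$, only the indices $i\in\{j_1,\dots,j_p\}$ survive, and after moving the factor $e_k^*$ back into the $a$-th slot the two signs $(-1)^{a-1}$ cancel, leaving $\sum_a\sum_k f_{j_a k}\,e_{j_1}^*\wedge\cdots\wedge e_k^*\wedge\cdots\wedge e_{j_p}^*$. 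This is precisely the action of $\nabla^2 f^{[p]}$: indeed $\nabla^2 f^{[1]}e_j^*=\sum_k f_{jk}e_k^*$, and the induced operator $T^{[p]}$ of Definition~\ref{def-2.1} extends $T^{[1]}$ as a derivation on the exterior algebra, so evaluating $\nabla^2 f^{[p]}\omega$ on $(e_{j_1},\dots,e_{j_p})$ reproduces the same expression.

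Throughout, the signs attached to $\Delta f$ and to $\delta$ (both coming from the non-negative convention $\Delta f=-\sum_i f_{ii}$ and $\delta=-\sum_i i_{e_i}\nabla_{e_i}$) are the only place where an error could plausibly creep in, so I would track them with care; modulo the derivation identity of the previous paragraph, the rest is a direct application of the two Leibniz rules.
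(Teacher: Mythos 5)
Your proof is correct and follows essentially the same route as the paper: both compute $\delta(df\wedge\omega)=-\sum_i i_{e_i}\nabla_{e_i}(df\wedge\omega)$ in a normal orthonormal frame and identify the four resulting terms, the paper doing the bookkeeping by evaluating on vector fields with explicit alternating signs while you organize it via the two Leibniz rules. Your signs all check out, and your verification of $\sum_i(\nabla_{e_i}df)\wedge(i_{e_i}\omega)=\nabla^2 f(\omega)$ on decomposable forms (which tacitly uses the symmetry of the Hessian to match Definition~\ref{def-2.1}) is the right point to single out.
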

\begin{proof}
Let $\{e_i\}_{i=1}^{n+1}$ be a local orthonormal frame for $\mathfrak{X}(M)$, and $X_j\in \mathfrak{X}(M)$, $j=1,2,\dots, p$, be smooth vector fields. Moreover, we may assume that $e_i$'s and $X_j$'s are geodesic at the point of computation. Then we derive
\begin{align*}
&\delta (df\wedge \omega)(X_1,\dots,X_p)=-i_{e_i}(\nabla_{e_i}(df\wedge \omega))(X_1,\dots,X_p)\\
&=-(\nabla_{e_i}(df\wedge \omega))(e_i,X_1,\dots,X_p)\\
&=-e_i((df\wedge \omega)(e_i,X_1,\dots,X_p))\\
&=-e_i\left(\langle \nabla f,e_i\rangle \omega(X_1,\dots,X_p)+\sum_{k=1}^p (-1)^{k}\langle \nabla f,X_k\rangle \omega (e_i,X_1,\dots,\hat{X_k},\dots, X_p) \right)\\
&=(\Delta f \cdot \omega-\nabla_{\nabla f}\omega+(\nabla^2 f)\omega)(X_1,\dots,X_p)\\
&\quad +\sum_{k=1}^p (-1)^{k+1}\langle \nabla f,X_k\rangle \nabla_{e_i}\omega (e_i,X_1,\dots,\hat{X_k},\dots, X_p),
\end{align*}
where $\hat{X_k}$ means that the argument $X_k$ disappears.

Noting
\begin{align*}
&\sum_{k=1}^p (-1)^{k+1}\langle \nabla f,X_k\rangle \nabla_{e_i}\omega (e_i,X_1,\dots,\hat{X_k},\dots, X_p)\\
&=\sum_{k=1}^p (-1)^{k}\langle \nabla f,X_k\rangle \delta\omega ( X_1,\dots,\hat{X_k},\dots, X_p)\\
&=-df\wedge \delta\omega(X_1,\dots,X_p),
\end{align*}
we may finish the proof.
\end{proof}
Hence we get
\begin{align*}
&\int_M \langle \delta d \omega, f\omega \rangle=\int_M \langle    \omega, \Delta f \cdot \omega-\nabla_{\nabla f}\omega+(\nabla^2 f)\omega-df\wedge \delta\omega\rangle\\
&-\int_\Sigma \langle J^*\omega , i_N (df\wedge \omega)\rangle +\int_M f|d\omega|^2+\int_\Sigma \langle i_N(d\omega),J^*(f\omega) \rangle\\
&=\int_M \Delta f | \omega|^2-\langle \omega,\nabla_{\nabla f}\omega\rangle +\langle \omega,(\nabla^2 f)\omega\rangle-\langle \omega, df\wedge \delta \omega\rangle\\
&-\int_\Sigma \langle J^*\omega , i_N (df\wedge \omega)\rangle +\int_M f|d\omega|^2+\int_\Sigma \langle i_N(d\omega),J^*(f\omega) \rangle.
\end{align*}
So we obtain
\begin{align*}
&\int_M \frac{1}{2}\Delta(f|\omega|^2)=\int_M \left(\frac{1}{2}\Delta f\cdot |\omega|^2-\langle \omega,\omega\rangle_{\nabla f}-f\left( \langle W_{M}^{[p]}(\omega),\omega\rangle +|\nabla \omega|^2\right)\right)\\
&+\int_M \langle d\delta \omega, f\omega \rangle+\int_M \Delta f | \omega|^2-\langle \omega,\nabla_{\nabla f}\omega\rangle +\langle \omega,(\nabla^2 f)\omega\rangle-\langle \omega, df\wedge \delta \omega\rangle\\
&-\int_\Sigma \langle J^*\omega , i_N (df\wedge \omega)\rangle +\int_M f|d\omega|^2+\int_\Sigma \langle i_N(d\omega),J^*(f\omega) \rangle.
\end{align*}
Next we compute the term $-\int_M \langle \omega, df\wedge \delta \omega\rangle$ as follows.
\begin{align*}
&-\int_M \langle \omega, df\wedge \delta \omega\rangle=-\int_M \langle \omega, d(f \delta \omega)-fd\delta \omega \rangle\\
&=-\int_M f|\delta \omega|^2+\int_\Sigma \langle i_N \omega, J^*(f\delta \omega)\rangle +\int_M f\langle d\delta \omega, \omega\rangle.
\end{align*}
As a result, we get
\begin{align}\label{eq-first-formula}
&\int_M \frac{1}{2}\Delta(f|\omega|^2)=\int_M \left(\frac{1}{2}\Delta f\cdot |\omega|^2-\langle \omega,\omega\rangle_{\nabla f}-f\left( \langle W_{M}^{[p]}(\omega),\omega\rangle +|\nabla \omega|^2\right)\right)\nonumber\\
&+2\int_M \langle d\delta \omega, f\omega \rangle+\int_M \Delta f | \omega|^2-\langle \omega,\nabla_{\nabla f}\omega\rangle +\langle \omega,(\nabla^2 f)\omega\rangle\nonumber\\
&-\int_M f|\delta \omega|^2+\int_\Sigma \langle i_N \omega, J^*(f\delta \omega)\rangle\nonumber\\
&-\int_\Sigma \langle J^*\omega , i_N (df\wedge \omega)\rangle +\int_M f|d\omega|^2+\int_\Sigma \langle i_N(d\omega),J^*(f\omega) \rangle.
\end{align}

\subsection{The calculation of  $\int_M \langle d\delta \omega, f\omega \rangle$.} For this term, we derive
\begin{align*}
\int_M \langle d\delta \omega, f\omega \rangle&=\int_M \langle \delta \omega, \delta(f\omega) \rangle-\int_\Sigma \langle J^*(\delta \omega),i_N(f\omega)\rangle.
\end{align*}
Note that
\begin{align*}
\delta (f\omega)=-i_{e_i}(\nabla_{e_i}(f\omega))=-i_{\nabla f}\omega+f\delta \omega.
\end{align*}
So we get
\begin{align*}
\int_M \langle d\delta \omega, f\omega \rangle=&\int_M \langle \delta \omega, -i_{\nabla f}\omega \rangle+\int_M f|\delta \omega|^2-\int_\Sigma \langle J^*(\delta \omega),i_N(f\omega)\rangle\\
=&-\int_M \langle  \omega, d(i_{\nabla f}\omega) \rangle-\int_\Sigma \langle i_N\omega, J^*(i_{\nabla f}\omega)\rangle\\
&+\int_M f|\delta \omega|^2-\int_\Sigma \langle J^*(\delta \omega),i_N(f\omega)\rangle.
\end{align*}
Next we compute the differential $d(i_{\nabla f}\omega)$, which results from the following.
\begin{prop}\label{prop-computation}
For a Lipschitz vector field $F$ on $M$, we have
\begin{equation}
d(i_F\omega)=-i_F(d\omega)+\nabla_F\omega+\nabla F(\omega).
\end{equation}
\end{prop}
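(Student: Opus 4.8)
The plan is to prove the identity pointwise by a direct computation in a local frame, in the same spirit as the proof of Proposition~\ref{prop-delta}. Conceptually the formula is nothing but the combination of Cartan's homotopy formula $\mathcal{L}_F\omega = d(i_F\omega)+i_F(d\omega)$ with the fact that, for the torsion-free Levi-Civita connection, the Lie derivative decomposes as $\mathcal{L}_F\omega = \nabla_F\omega + \nabla F(\omega)$; rearranging these two identities yields the claim. Since $F$ is only Lipschitz, $\nabla F$ is defined merely almost everywhere, so the identity should be understood as holding pointwise at every point where $F$ is differentiable, which suffices for the integral applications such as Theorem~\ref{prop-Pohozhaev}.

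To carry out the direct argument, I would fix a point $x$, choose a local orthonormal frame $\{e_i\}$ together with vector fields $X_1,\dots,X_p$ that are geodesic at $x$, and express the exterior derivative through the covariant derivative by means of the torsion-free formula
\begin{equation*}
d(i_F\omega)(X_1,\dots,X_p)=\sum_{k=1}^p (-1)^{k-1}\bigl(\nabla_{X_k}(i_F\omega)\bigr)(X_1,\dots,\hat{X_k},\dots,X_p),
\end{equation*}
where $\hat{X_k}$ denotes omission of the argument. Applying the Leibniz rule $\nabla_{X_k}(i_F\omega)=i_{\nabla_{X_k}F}\omega+i_F(\nabla_{X_k}\omega)$ then splits the sum into two pieces, one carrying the derivatives of $F$ and one carrying the covariant derivatives of $\omega$.

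The remaining task is to identify each piece. For the $\omega$-derivative piece, expanding $i_F$ and comparing with the analogous frame expansion of $-i_F(d\omega)(X_1,\dots,X_p)=-(d\omega)(F,X_1,\dots,X_p)$ shows that it equals $-i_F(d\omega)+\nabla_F\omega$, the extra summand $\nabla_F\omega$ arising precisely from the slot in which $F$ itself sits. For the $F$-derivative piece, moving each $\nabla_{X_k}F$ from the leading argument into the $k$-th slot contributes a permutation sign $(-1)^{k-1}$ that cancels the $(-1)^{k-1}$ already present, leaving $\sum_{k}\omega(X_1,\dots,\nabla_{X_k}F,\dots,X_p)$, which is exactly $\nabla F(\omega)$ by Definition~\ref{def-2.1} and Example~\ref{exa-2.2}. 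Summing the two pieces produces the asserted formula. I expect the only delicate point to be the sign bookkeeping in this reindexing step, since an off-by-one in the permutation sign would spoil the identification of the $\nabla F(\omega)$ term; everything else is routine once the torsion-free formula for $d$ and the derivation property of $\nabla$ are in hand.
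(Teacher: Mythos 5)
Your proof is correct and, in its carried-out form, essentially the same as the paper's: the author likewise expresses $d(i_F\omega)$ through the covariant derivative in a frame geodesic at the point of computation, applies the Leibniz rule to split off the $\nabla_{e_i}F$ terms, and identifies the two resulting pieces as $-i_F(d\omega)+\nabla_F\omega$ and $\nabla F(\omega)$. Your opening observation that the identity is just Cartan's formula combined with the torsion-free decomposition $\mathcal{L}_F\omega=\nabla_F\omega+\nabla F(\omega)$ is a valid conceptual shortcut not mentioned in the paper, and your caveat about the Lipschitz regularity of $F$ (the identity holding at points of differentiability, which suffices for the integral applications) is likewise sound.
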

\begin{proof}
As before, let $\{e_i\}_{i=1}^{n+1}$ be a local orthonormal frame for $\mathfrak{X}(M)$, and $X_j\in \mathfrak{X}(M)$, $j=1,2,\dots, p$, be smooth vector fields. Moreover, we may assume that $e_i$'s and $X_j$'s are geodesic at the point of computation. Denote by $\{e_i^*\}_{i=1}^{n+1}$ the dual frame of $\{e_i\}_{i=1}^{n+1}$. We may deduce
\begin{align*}
&d(i_F\omega)(X_1,\dots,X_p)=e_i^*\wedge \nabla_{e_i}(i_F\omega)(X_1,\dots,X_p)\\
&=\sum_{k=1}^p(-1)^{k+1}\langle e_i,X_k\rangle \nabla_{e_i}(i_F\omega)(X_1,\dots,\hat{X_k},\dots,X_p)\\
&=\sum_{k=1}^p(-1)^{k+1}\langle e_i,X_k\rangle  {e_i}( \omega (F,X_1,\dots,\hat{X_k},\dots,X_p))\\
&=\sum_{k=1}^p(-1)^{k+1}\langle e_i,X_k\rangle  \bigg(\nabla_{e_i} \omega (F,X_1,\dots,\hat{X_k},\dots,X_p) \\
&\quad +  \omega (\nabla_{e_i}F,X_1,\dots,\hat{X_k},\dots,X_p)\bigg)\\
&=-\bigg(\sum_{k=1}^p(-1)^{k}\langle e_i,X_k\rangle  \nabla_{e_i} \omega (F,X_1,\dots,\hat{X_k},\dots,X_p) \\
&\quad +\langle e_i,F\rangle  \nabla_{e_i} \omega ( X_1,\dots,X_p)-\langle e_i,F\rangle  \nabla_{e_i} \omega ( X_1,\dots,X_p)\bigg)\\
&\quad +\sum_{k=1}^{p}\omega(X_1,\dots, \nabla F(X_k),\dots, X_p)\\
&=(-i_F(d\omega)+\nabla_F\omega+\nabla F(\omega))(X_1,\dots,X_p),
\end{align*}
which is the conclusion.
\end{proof}
Therefore we derive
\begin{align}\label{eq-second-formula}
&\int_M \langle d\delta \omega, f\omega \rangle=\int_M \left(\langle  \omega,  i_{\nabla f}(d\omega) \rangle-\langle \omega, \nabla_{\nabla f}\omega\rangle-\langle \omega, \nabla^2 f(\omega)\rangle\right)\nonumber\\
&-\int_\Sigma \langle i_N\omega, J^*(i_{\nabla f}\omega)\rangle+\int_M f|\delta \omega|^2-\int_\Sigma \langle J^*(\delta \omega),i_N(f\omega)\rangle.
\end{align}

\subsection{Further computation.} Next, inserting \eqref{eq-second-formula} into \eqref{eq-first-formula} we obtain
\begin{align*}
\int_M \frac{1}{2}\Delta(f|\omega|^2)&=\int_M \left(\frac{1}{2}\Delta f\cdot |\omega|^2-\langle \omega,\omega\rangle_{\nabla f}-f\left( \langle W_{M}^{[p]}(\omega),\omega\rangle +|\nabla \omega|^2\right)\right)\\
&+2\int_M \left(\langle  \omega,  i_{\nabla f}(d\omega) \rangle-\langle \omega, \nabla_{\nabla f}\omega\rangle-\langle \omega, \nabla^2 f(\omega)\rangle\right)\\
&-2\int_\Sigma \langle i_N\omega, J^*(i_{\nabla f}\omega)\rangle+2\int_M f|\delta \omega|^2-2\int_\Sigma \langle J^*(\delta \omega),i_N(f\omega)\rangle\\
&+\int_M \Delta f | \omega|^2-\langle \omega,\nabla_{\nabla f}\omega\rangle +\langle \omega,(\nabla^2 f)\omega\rangle \\
&-\int_\Sigma \langle J^*\omega , i_N (df\wedge \omega)\rangle +\int_M f|d\omega|^2+\int_\Sigma \langle i_N(d\omega),J^*(f\omega) \rangle\\
&-\int_M f|\delta \omega|^2+\int_\Sigma \langle i_N \omega, J^*(f\delta \omega)\rangle.
\end{align*}
Rearranging the terms, we see
\begin{align*}
&\int_M \frac{1}{2}\Delta(f|\omega|^2)=\int_M \left(\frac{3}{2}\Delta f\cdot |\omega|^2-\frac{5}{2}\langle \omega,\omega\rangle_{\nabla f}-f\left( \langle W_{M}^{[p]}(\omega),\omega\rangle +|\nabla \omega|^2\right)\right)\\
&+\int_M \left(2\langle  \omega,  i_{\nabla f}(d\omega) \rangle -\langle \omega, \nabla^2 f(\omega)\rangle\right)+\int_M f(|\delta \omega|^2+|d\omega|^2)\\
&-2\int_\Sigma \langle i_N\omega, J^*(i_{\nabla f}\omega)\rangle-\int_\Sigma \langle J^*(\delta \omega),i_N(f\omega)\rangle\\
&-\int_\Sigma \langle J^*\omega , i_N (df\wedge \omega)\rangle + \int_\Sigma \langle i_N(d\omega),J^*(f\omega) \rangle.
\end{align*}
Next we compute the term
\begin{align*}
&\int_M \langle \omega,\omega\rangle_{\nabla f}=\int_M \mathrm{div}(|\omega|^2 \nabla f)+|\omega|^2 \Delta f=-\int_\Sigma |\omega|^2 f_N+\int_M  |\omega|^2 \Delta f.
\end{align*}
On the other hand, we get
\begin{align*}
&\int_M \frac{1}{2}\Delta(f|\omega|^2)=\int_\Sigma \frac{1}{2} f_N|\omega|^2+f\langle \nabla_N\omega,\omega\rangle\\
&=\int_\Sigma \frac{1}{2} f_N|\omega|^2+f(\langle J^*(\nabla_N \omega), J^*\omega\rangle +\langle i_N(\nabla_N \omega),i_N\omega\rangle).
\end{align*}
Consequently, we obtain
\begin{align*}
&\int_\Sigma  f(\langle J^*(\nabla_N \omega), J^*\omega\rangle +\langle i_N(\nabla_N \omega),i_N\omega\rangle)\\
&=\int_M \left(-\Delta f\cdot |\omega|^2-f\left( \langle W_{M}^{[p]}(\omega),\omega\rangle +|\nabla \omega|^2\right)\right)\\
&+\int_M \left(2\langle  \omega,  i_{\nabla f}(d\omega) \rangle -\langle \omega, \nabla^2 f(\omega)\rangle\right)+\int_M f(|\delta \omega|^2+|d\omega|^2)\\
&+2\int_\Sigma |\omega|^2 f_N-2\int_\Sigma \langle i_N\omega, J^*(i_{\nabla f}\omega)\rangle-\int_\Sigma \langle J^*(\delta \omega),i_N(f\omega)\rangle\\
&-\int_\Sigma \langle J^*\omega , i_N (df\wedge \omega)\rangle + \int_\Sigma \langle i_N(d\omega),J^*(f\omega) \rangle.
\end{align*}

Now we recall the formulas from \cite{RS11}.
\begin{prop}[{\cite[Lem.~18 (ii)]{RS11}}]
There hold
\begin{align}
\delta^\Sigma (J^*\omega)&=J^*(\delta \omega)+i_N(\nabla_N\omega)+S^{[p-1]}(i_N\omega)-nHi_N\omega,\label{eq3.5}\\
d^\Sigma i_N\omega&=-i_Nd\omega+J^*(\nabla_N\omega)-S^{[p]}(J^*\omega).\label{eq3.6}
\end{align}
\end{prop}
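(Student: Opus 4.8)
The plan is to prove both identities by a pointwise computation in an orthonormal frame adapted to the boundary. Fix $x\in\Sigma$ and choose a local orthonormal frame $\{e_1,\dots,e_n\}$ for $T\Sigma$ that is $\nabla^\Sigma$-geodesic at $x$, and set $e_{n+1}=N$; denote the dual coframe by $\{e^\alpha\}$ together with $\nu=N^\flat$, so that $J^*\nu=0$. At $x$ the Gauss and Weingarten formulas read $\nabla_{e_\alpha}e_\beta=h_{\alpha\beta}N$ and $\nabla_{e_\alpha}N=-S(e_\alpha)$, where $h_{\alpha\beta}=\langle S(e_\alpha),e_\beta\rangle$. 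Throughout I would use the frame expressions $d=\sum_\alpha e^\alpha\wedge\nabla_{e_\alpha}+\nu\wedge\nabla_N$ and $\delta=-\sum_\alpha i_{e_\alpha}\nabla_{e_\alpha}-i_N\nabla_N$ on $M$, together with the boundary analogues $d^\Sigma=\sum_\alpha e^\alpha\wedge\nabla^\Sigma_{e_\alpha}$ and $\delta^\Sigma=-\sum_\alpha i_{e_\alpha}\nabla^\Sigma_{e_\alpha}$, and the convention that interior products $i_N(\cdot)$ are implicitly restricted to $\Sigma$ via $J^*$.

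The key lemma is the comparison of the two connections on restricted forms: evaluating on tangent vectors and replacing $\nabla_{e_\alpha}Y_j$ by $\nabla^\Sigma_{e_\alpha}Y_j+h(e_\alpha,Y_j)N$ gives
\[
\nabla^\Sigma_{e_\alpha}(J^*\omega)=J^*(\nabla_{e_\alpha}\omega)+S(e_\alpha)^\flat\wedge i_N\omega,
\]
since the correction term collects into the wedge of the $1$-form $S(e_\alpha)^\flat$ with $i_N\omega$. I would also need that for $\xi=i_N\omega$ the analogous correction vanishes, because inserting $N$ into any slot of $\xi$ produces $\omega$ with $N$ repeated, hence zero; thus $\nabla^\Sigma_{e_\alpha}(i_N\omega)=J^*(\nabla_{e_\alpha}(i_N\omega))$, and combining with $\nabla_{e_\alpha}(i_N\omega)=i_N\nabla_{e_\alpha}\omega-i_{S(e_\alpha)}\omega$ converts everything into interior products of $\nabla_{e_\alpha}\omega$. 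The final algebraic ingredient is the identity $\sum_\alpha e^\alpha\wedge i_{S(e_\alpha)}\eta=S^{[\deg\eta]}(\eta)$ for any tangential form $\eta$ (immediate from $S(e_\alpha)=\sum_\beta h_{\alpha\beta}e_\beta$ and the symmetry $h_{\alpha\beta}=h_{\beta\alpha}$), together with $\mathrm{tr}\,S=nH$.

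For \eqref{eq3.6} I would expand $i_Nd\omega$ using $d=\sum_\alpha e^\alpha\wedge\nabla_{e_\alpha}+\nu\wedge\nabla_N$ and $i_N(e^\alpha\wedge\cdot)=-e^\alpha\wedge i_N(\cdot)$, $i_N(\nu\wedge\cdot)=(\cdot)-\nu\wedge i_N(\cdot)$; after applying $J^*$ this yields $J^*(i_Nd\omega)=-\sum_\alpha e^\alpha\wedge J^*(i_N\nabla_{e_\alpha}\omega)+J^*(\nabla_N\omega)$. On the other hand $d^\Sigma(i_N\omega)=\sum_\alpha e^\alpha\wedge\nabla^\Sigma_{e_\alpha}(i_N\omega)$, and substituting the no-correction relation and $\nabla_{e_\alpha}(i_N\omega)=i_N\nabla_{e_\alpha}\omega-i_{S(e_\alpha)}\omega$ produces exactly $-i_Nd\omega+J^*(\nabla_N\omega)-\sum_\alpha e^\alpha\wedge i_{S(e_\alpha)}\omega$, whose last sum equals $S^{[p]}(J^*\omega)$. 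For \eqref{eq3.5} I would start from $\delta^\Sigma(J^*\omega)=-\sum_\alpha i_{e_\alpha}\nabla^\Sigma_{e_\alpha}(J^*\omega)$, insert the comparison lemma, use $i_{e_\alpha}J^*=J^*i_{e_\alpha}$ on the main term to recover $J^*(\delta\omega)+i_N(\nabla_N\omega)$ (the term $i_N\nabla_N\omega$ appearing because $\delta$ on $M$ carries the extra normal summand), and expand $i_{e_\alpha}(S(e_\alpha)^\flat\wedge i_N\omega)$ by the Leibniz rule into $h_{\alpha\alpha}i_N\omega-S(e_\alpha)^\flat\wedge i_{e_\alpha}(i_N\omega)$; the outer minus sign then yields the remaining terms $-nH\,i_N\omega+S^{[p-1]}(i_N\omega)$.

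The step I expect to be the main obstacle is the careful sign and slot bookkeeping in the connection-comparison lemma and in reassembling the shape-operator contractions into the operators $S^{[p]}$ and $S^{[p-1]}$: one must correctly identify the correction term as $S(e_\alpha)^\flat\wedge i_N\omega$ (with the right sign from moving $N$ through $j-1$ arguments), and then verify the algebraic identity $\sum_\alpha e^\alpha\wedge i_{S(e_\alpha)}\eta=S^{[\deg\eta]}(\eta)$ using the symmetry of $S$. Once these are in place, both formulas follow by matching the normal-direction contributions produced by the extra summand $\nu\wedge\nabla_N$ (respectively $-i_N\nabla_N$) in the frame expression of $d$ (respectively $\delta$).
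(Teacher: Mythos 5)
The paper does not actually prove this proposition: it is quoted verbatim from Raulot--Savo \cite[Lem.~18~(ii)]{RS11}, so there is no in-paper argument to compare against. Your adapted-frame computation is correct under the paper's conventions ($S(X)=-\nabla_XN$ with $N$ the inner normal, $d=\sum_i e^i\wedge\nabla_{e_i}$, $\delta=-\sum_i i_{e_i}\nabla_{e_i}$, $H=\mathrm{tr}\,S/n$): the connection-comparison identity $\nabla^\Sigma_{e_\alpha}(J^*\omega)=J^*(\nabla_{e_\alpha}\omega)+S(e_\alpha)^\flat\wedge i_N\omega$, the vanishing of that correction for $i_N\omega$, and the reassembly $\sum_\alpha e^\alpha\wedge i_{S(e_\alpha)}\eta=S^{[\deg\eta]}(\eta)$ (via the symmetry of $S$) all check out, and the two identities then follow exactly as you describe. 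This is essentially the same argument as in the cited source, so nothing further is needed.
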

So we substitute $J^*(\delta \omega)$ from \eqref{eq3.5} and $J^*(\nabla_N\omega)$ from \eqref{eq3.6} into the equality to get
\begin{align*}
&\int_\Sigma  f \langle d^\Sigma i_N\omega+i_Nd\omega+S^{[p]}(J^*\omega), J^*\omega\rangle +\int_\Sigma f\langle i_N(\nabla_N \omega),i_N\omega\rangle \\
&=\int_M \left(-\Delta f\cdot |\omega|^2-f\left( \langle W_{M}^{[p]}(\omega),\omega\rangle +|\nabla \omega|^2\right)\right)\\
&+\int_M \left(2\langle  \omega,  i_{\nabla f}(d\omega) \rangle -\langle \omega, \nabla^2 f(\omega)\rangle\right)+\int_M f(|\delta \omega|^2+|d\omega|^2)\\
&+2\int_\Sigma |\omega|^2 f_N-2\int_\Sigma \langle i_N\omega, J^*(i_{\nabla f}\omega)\rangle\\
&-\int_\Sigma \langle J^*\omega , i_N (df\wedge \omega)\rangle + \int_\Sigma \langle i_N(d\omega),J^*(f\omega) \rangle\\
&+\int_\Sigma \langle -\delta^\Sigma (J^*\omega)+i_N(\nabla_N\omega)+S^{[p-1]}(i_N\omega)-nHi_N\omega,i_N(f\omega)\rangle.
\end{align*}
Simplifying it, we derive
\begin{align*}
&0=\int_M \left(-\Delta f\cdot |\omega|^2-f\left( \langle W_{M}^{[p]}(\omega),\omega\rangle +|\nabla \omega|^2\right)\right)\\
&+\int_M \left(2\langle  \omega,  i_{\nabla f}(d\omega) \rangle -\langle \omega, \nabla^2 f(\omega)\rangle\right)+\int_M f(|\delta \omega|^2+|d\omega|^2)\\
&+2\int_\Sigma |\omega|^2 f_N-2\int_\Sigma \langle i_N\omega, J^*(i_{\nabla f}\omega)\rangle-\int_\Sigma  f \langle d^\Sigma i_N\omega+S^{[p]}(J^*\omega), J^*\omega\rangle  \\
&-\int_\Sigma \langle J^*\omega , i_N (df\wedge \omega)\rangle +\int_\Sigma \langle -\delta^\Sigma (J^*\omega)+S^{[p-1]}(i_N\omega)-nHi_N\omega,i_N(f\omega)\rangle.
\end{align*}
Noting
\begin{align*}
-\int_\Sigma   \langle d^\Sigma i_N\omega , fJ^*\omega\rangle &=-\int_\Sigma   \langle  i_N\omega , \delta^\Sigma (fJ^*\omega)\rangle\\
&=-\int_\Sigma   \langle  i_N\omega , -i_{\nabla^\Sigma f}J^*\omega+f\delta^\Sigma J^*\omega \rangle,
\end{align*}
we finally get
\begin{align*}
0=&\int_M \left(-\Delta f\cdot |\omega|^2-f\left( \langle W_{M}^{[p]}(\omega),\omega\rangle +|\nabla \omega|^2\right)\right)\\
&+\int_M \left(2\langle  \omega,  i_{\nabla f}(d\omega) \rangle -\langle \omega, \nabla^2 f(\omega)\rangle\right)+\int_M f(|\delta \omega|^2+|d\omega|^2)\\
&+2\int_\Sigma |\omega|^2 f_N-2\int_\Sigma \langle i_N\omega, J^*(i_{\nabla f}\omega)\rangle-\int_\Sigma  f \langle  S^{[p]}(J^*\omega), J^*\omega\rangle \\
& +\int_\Sigma   \langle  i_N\omega ,  i_{\nabla^\Sigma f}J^*\omega \rangle
-\int_\Sigma \langle J^*\omega , i_N (df\wedge \omega)\rangle \\
&+\int_\Sigma \langle -2\delta^\Sigma (J^*\omega)+S^{[p-1]}(i_N\omega)-nHi_N\omega,i_N(f\omega)\rangle.
\end{align*}
Recall
\begin{equation}
\mathcal{B}(\omega,\omega)=\langle  S^{[p]}(J^*\omega), J^*\omega\rangle+nH|  i_N\omega |^2-\langle S^{[p-1]}(i_N\omega) ,i_N \omega\rangle.
\end{equation}
We may get
\begin{align*}
&\int_M \left( \Delta f\cdot |\omega|^2+f\left( \langle W_{M}^{[p]}(\omega),\omega\rangle +|\nabla \omega|^2\right)\right)\\
&-\int_M \left(2\langle  \omega,  i_{\nabla f}(d\omega) \rangle -\langle \omega, \nabla^2 f(\omega)\rangle\right)-\int_M f(|\delta \omega|^2+|d\omega|^2)\\
&=2\int_\Sigma |\omega|^2 f_N-2\int_\Sigma \langle i_N\omega, J^*(i_{\nabla f}\omega)\rangle+\int_\Sigma   \langle  i_N\omega ,  i_{\nabla^\Sigma f}J^*\omega \rangle\\
&-\int_\Sigma \langle J^*\omega , i_N (df\wedge \omega)\rangle -2\int_\Sigma \langle \delta^\Sigma (J^*\omega),i_N(f\omega)\rangle-\int_\Sigma f\mathcal{B}(\omega,\omega).
\end{align*}

Next we note
\begin{align*}
i_{\nabla f}\omega&=f_N i_N\omega +i_{\nabla^\Sigma f}\omega,\\
i_N(df\wedge \omega)&= f_NJ^*\omega-d^\Sigma f\wedge i_N\omega,\\
|\omega|^2&=|J^*\omega|^2+|i_N\omega|^2.
\end{align*}
Therefore we get
\begin{align*}
&\int_M \left( \Delta f\cdot  |\omega |^2+f\left( \langle W_{M}^{[p]}(\omega),\omega\rangle + |\nabla \omega |^2\right)\right)\\
&-\int_M \left(2\langle  \omega,  i_{\nabla f}(d\omega) \rangle -\langle \omega, \nabla^2 f(\omega)\rangle\right)-\int_M f( |\delta \omega |^2+ |d\omega |^2)\\
&=\int_\Sigma f_N |J^*\omega |^2 -\int_\Sigma   \langle  i_N\omega ,  i_{\nabla^\Sigma f}J^*\omega \rangle+\int_\Sigma \langle J^*\omega , d^\Sigma f\wedge i_N\omega\rangle\\
& -2\int_\Sigma \langle \delta^\Sigma (J^*\omega),i_N(f\omega)\rangle-\int_\Sigma f\mathcal{B}(\omega,\omega).
\end{align*}
Next we invoke a basic fact for differential forms, for $\varphi\in \Omega^q(M)$, $\psi\in \Omega^{q-1}(M)$, and $X\in \mathfrak{X}(M)$ with the dual $1$-form $X^*$,
\begin{equation*}
\langle \varphi,X^*\wedge \psi\rangle=\langle i_X\varphi,\psi\rangle,
\end{equation*}
which can be verified straightforwardly. Thus
\begin{align*}
&\langle J^*\omega , d^\Sigma f\wedge i_N\omega\rangle=\langle  i_N\omega ,  i_{\nabla^\Sigma f}J^*\omega \rangle,
\end{align*}
and we get
\begin{align*}
&\int_M \left( \Delta f\cdot  |\omega |^2+f\left( \langle W_{M}^{[p]}(\omega),\omega\rangle + |\nabla \omega |^2\right)\right)\\
&-\int_M \left(2\langle  \omega,  i_{\nabla f}(d\omega) \rangle -\langle \omega, \nabla^2 f(\omega)\rangle\right)-\int_M f( |\delta \omega |^2+| d\omega |^2)\\
&=\int_\Sigma f_N  |J^*\omega |^2  -2\int_\Sigma \langle \delta^\Sigma (J^*\omega),i_N(f\omega)\rangle-\int_\Sigma f\mathcal{B}(\omega,\omega),
\end{align*}
which is the desired conclusion in Theorem~\ref{thm-Reilly}.

\section{A sharp lower bound for the first non-zero Steklov eigenvalue}\label{sec4}

In this section we prove Theorem~\ref{thm-bound}. Let $\varphi$ be an eigenform corresponding to the eigenvalue $\sigma$. Then choosing $\omega=d\varphi$ and $f=f_\varepsilon$ in our weighted Reilly formula, and noting
\begin{align*}
&\delta d\varphi=\delta d\varphi + d\delta \varphi=\Delta \varphi=0,\text{ on }M,\\
&\qquad f_\varepsilon=0,\  \nabla f_\varepsilon=N, \text{ on }\Sigma,
\end{align*}
we get
\begin{align}\label{eq-comparison}
\int_\Sigma  |J^*d\varphi |^2 da=& \int_M \langle d\varphi, \nabla^2 f_\varepsilon (d\varphi)\rangle dv+\int_M \Delta f_\varepsilon |d\varphi |^2 dv\nonumber \\
&+\int_M f_\varepsilon (|\nabla d\varphi|^2+\langle W_M^{[p+1]}(d\varphi),d\varphi\rangle)dv.
\end{align}

On the other hand, by the Pohozhaev-type identity \eqref{eq-Poh} with $F=\nabla f_\varepsilon$ (noting $F=N$ on $\Sigma$ and $\delta d\varphi=0$ on $M$), we see
\begin{align}
&\int_M  |d\varphi |^2 \Delta f_\varepsilon dv+2\int_M \langle d\varphi, \nabla^2 f_\varepsilon (d\varphi)\rangle dv\nonumber \\
&\qquad =\int_\Sigma  |J^* d\varphi |^2 da-\int_\Sigma  |i_N d\varphi |^2 da.
\end{align}
Adding these two identities and using Proposition~\ref{prop-approximation}, we derive
\begin{align*}
\int_\Sigma |i_N d\varphi |^2 da=& -\int_M \langle d\varphi, \nabla^2 f_\varepsilon (d\varphi)\rangle dv\\
&+\int_M f_\varepsilon (|\nabla d\varphi|^2+\langle W_M^{[p+1]}(d\varphi),d\varphi\rangle) dv\\
\geq &(p+1)(c-\varepsilon)\int_M  | d\varphi |^2 dv\\
&+\int_M f_\varepsilon (|\nabla d\varphi|^2+\langle W_M^{[p+1]}(d\varphi),d\varphi\rangle)dv.
\end{align*}
Letting $\varepsilon \to 0^+$ and using Proposition~\ref{prop-approximation} again, we get
\begin{align*}
\int_\Sigma  |i_N d\varphi |^2da&\geq (p+1)c\int_M  | d\varphi |^2 dv+\int_M f  (|\nabla d\varphi|^2+\langle W_M^{[p+1]}(d\varphi),d\varphi\rangle) dv.
\end{align*}
Finally note that
\begin{align}
\int_\Sigma  |i_N d\varphi |^2 da&=\sigma^2 \int_\Sigma  |\phi |^2 da,\\
\int_M  | d\varphi |^2 dv&=\sigma \int_\Sigma  |\phi |^2 da.
\end{align}
So we get the desired inequality.

For a Euclidean ball with the radius $1/c$, it follows from Example~\ref{exam1} that $\sigma_1^{[p]}=(p+1)c$ for $1\leq p\leq n$ and we get the equality. Conversely, when the equality holds, i.e., $\sigma=(p+1)c$, we readily see that
\begin{align*}
\int_M f  (|\nabla d\varphi|^2+\langle W_M^{[p+1]}(d\varphi),d\varphi\rangle)dv=0,
\end{align*}
which implies
\begin{align*}
\nabla d\varphi=0 \text{ and } \langle W_M^{[p+1]}(d\varphi),d\varphi\rangle=0\text{ on }M.
\end{align*}
Moreover, since $\varphi$ is the eigenform corresponding to the eigenvalue $\sigma=(p+1)c$, it does not belong to $\mathcal{H}^p_D(M)$. So $\varphi$ is a non-trivial solution to \eqref{eq-rigidity} and we finish the proof.

\section{Comparison between Steklov eigenvalues and boundary Hodge Laplacian eigenvalues}\label{sec5}

In this section we prove Theorem~\ref{thm-comparison}. First for $\forall \varphi\in \Omega^p(M)$ satisfying the equation~\eqref{eq-DN}, by the equality \eqref{eq-comparison} and Proposition~\ref{prop-approximation}, we get
\begin{align}\label{eq-5.1}
\int_\Sigma  |J^*d\varphi |^2 da \geq &(n-p)(c-\varepsilon)\int_M   |d\varphi |^2 dv\nonumber \\
&+\int_M f_\varepsilon (|\nabla d\varphi|^2+\langle W_M^{[p+1]}(d\varphi),d\varphi\rangle) dv.
\end{align}
Here the inequality is due to the following pointwise estimate. At a point $x\in M$, let $\{\gamma_i\}_{i=1}^{n+1}$ be the eigenvalues of $-\nabla^2 f_\varepsilon$ with corresponding unit eigenvectors $\{e_i\}_{i=1}^{n+1}$ in $T_xM$. Then we find
\begin{align*}
&\quad \Delta f_\varepsilon |d\varphi|^2+\langle d\varphi, \nabla^2 f_\varepsilon (d\varphi)\rangle\\
&=\sum_{i_1<i_2<\cdots<i_{p+1}}\left(\sum_{i=1}^{n+1}\gamma_i-(\gamma_{i_1}+\gamma_{i_2}+\cdots+\gamma_{i_{p+1}})\right) ((d\varphi)_{i_1i_2\dots i_{p+1}})^2\\
&=\sum_{i_1<i_2<\cdots<i_{p+1}}\left(\sum_{i\neq i_1,i_2,\dots,i_{p+1}}\gamma_i\right) ((d\varphi)_{i_1i_2\dots i_{p+1}})^2\\
&\geq (n-p)(c-\varepsilon)\sum_{i_1<i_2<\cdots<i_{p+1}} ((d\varphi)_{i_1i_2\dots i_{p+1}})^2\\
&= (n-p)(c-\varepsilon)|d\varphi|^2.
\end{align*}

Next letting $\varepsilon \to 0^+$ in the inequality \eqref{eq-5.1} and using Proposition~\ref{prop-approximation}, we get
\begin{align}\label{eq-5.2}
\int_\Sigma  |J^*d\varphi |^2da &\geq  (n-p)c\int_M    |d\varphi |^2 dv+\int_M f (|\nabla d\varphi|^2+\langle W_M^{[p+1]}(d\varphi),d\varphi\rangle) dv\nonumber\\
&\geq (n-p)c\int_M    |d\varphi |^2 dv.
\end{align}

Now let $\{\phi_i\}_{i=1}^k\subset c\mathcal{C}^p(\Sigma)$ be orthonormal eigenforms of the Hodge Laplacian $\Delta_\Sigma$ corresponding to eigenvalues $\{\lambda_i^{[p]}\}_{i=1}^k$. Let $\{\varphi_i\}_{i=1}^k\subset \Omega^p(M)$ be a set of linearly independent solutions to the equation~\eqref{eq-DN} satisfying $J^*\varphi_i=\phi_i$ on $\Sigma$. Then for any $a_i\in\R$, $i=1,2,\dots,k$ (not all being zero), using the inequality~\eqref{eq-5.2} we get
\begin{align*}
\sigma^{[p]}_k&\leq \frac{\int_M  |d  \sum_{i=1}^k a_i\varphi_i |^2dv}{\int_\Sigma  |J^*(\sum_{i=1}^k a_i\varphi_i) |^2da}\\
&\leq \frac{1}{(n-p)c}\frac{\int_\Sigma  |d^\Sigma  \sum_{i=1}^k a_i\phi_i |^2da}{\int_\Sigma  | \sum_{i=1}^k a_i\phi_i |^2da}\\
&=\frac{1}{(n-p)c}\frac{\sum_{i=1}^k a_i^2\lambda_i^{[p]}}{\sum_{i=1}^k a_i^2}\\
&\leq \frac{1}{(n-p)c} \lambda_k^{[p]}.
\end{align*}
For a Euclidean ball with the radius $1/c$, it follows from Examples~\ref{exam1} and \ref{exam3} that $\sigma_k^{[p]}=(p+1)c$ and $\lambda_k^{[p]}=(p+1)(n-p)c^2$ for $1\leq p\leq n-1$ and $1\leq k\leq C_{n+1}^{p+1}$. So we get the equality and finish the proof.

\section{Final remarks}\label{sec6}

If we weaken the assumptions in Theorem~\ref{thm-bound}, then by using Raulot and Savo's un-weighted Reilly formula \eqref{eq-RS} we may obtain a non-sharp lower bound for the first non-zero eigenvalue of the operator $\mathcal{D}^{[p]}$.
\begin{thm}\label{thm-non-sharp}
Let $(M^{n+1},g)$ be an $(n+1)$-dimensional smooth compact connected oriented Riemannian manifold with boundary $\Sigma=\pt M$ and $1\leq p\leq n-1$ an integer. Assume that the $(p+1)$st Weitzenb\"{o}ck curvature of $M$ is non-negative, and the $(p+1)$-curvature $\sigma_{p+1}(\Sigma)$ and the $(n-p)$-curvature $\sigma_{n-p}(\Sigma)$ of $\Sigma$ in $M$ satisfy $\sigma_{p+1}(\Sigma)\geq (p+1)c$ and $\sigma_{n-p}(\Sigma)\geq (n-p)c$ for some constant $c>0$. Denote by $\sigma$ the first non-zero eigenvalue of the Dirichlet-to-Neumann operator $\mathcal{D}^{[p]}$. Then
\begin{equation*}
\sigma> \frac{(p+1)c}{2}.
\end{equation*}
\end{thm}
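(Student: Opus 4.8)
The plan is to run the \emph{un-weighted} Reilly formula \eqref{eq-RS} on the exact $(p+1)$-form $\omega=d\varphi$, where $\varphi$ solves the Steklov problem \eqref{eq-DN} with $J^*\varphi=\phi$ for an eigenform $\phi$ of $\mathcal{D}^{[p]}$ with first non-zero eigenvalue $\sigma>0$. Since $\delta\varphi=0$ and $\Delta\varphi=0$, one has $\delta d\varphi=\Delta\varphi-d\delta\varphi=0$ and trivially $d\,d\varphi=0$, so the left-hand side of \eqref{eq-RS} vanishes identically. Thus the whole content of the formula lies in balancing the nonnegative interior integral against the boundary integral, and there is no weight function and no Pohozhaev identity involved.

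First I would record the boundary data of $\omega=d\varphi$. Because $J^*$ commutes with $d$, we have $J^*\omega=d^\Sigma(J^*\varphi)=d^\Sigma\phi$, while the eigenvalue equation gives $i_N\omega=i_Nd\varphi=-\mathcal{D}^{[p]}(\phi)=-\sigma\phi$. Applying \eqref{eq-RS} at degree $p+1$, the interior term $\int_M(|\nabla d\varphi|^2+\langle W_M^{[p+1]}(d\varphi),d\varphi\rangle)\,dv$ is nonnegative by the hypothesis $W_M^{[p+1]}\geq 0$. For the middle boundary term, since $\phi$ is co-closed ($\delta^\Sigma\phi=0$), integration by parts on the closed manifold $\Sigma$ gives $2\int_\Sigma\langle\delta^\Sigma(d^\Sigma\phi),-\sigma\phi\rangle\,da=-2\sigma\int_\Sigma|d^\Sigma\phi|^2\,da$. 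Moving the interior term to the other side, \eqref{eq-RS} therefore reduces to
\[
2\sigma\int_\Sigma|d^\Sigma\phi|^2\,da\geq\int_\Sigma\mathcal{B}(d\varphi,d\varphi)\,da.
\]

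The heart of the argument, and the step I expect to be most delicate, is bounding $\mathcal{B}(d\varphi,d\varphi)$ from below using the two curvature hypotheses. Writing out \eqref{eq-B} at degree $p+1$ (so the indices shift to $S^{[p+1]}$ and $S^{[p]}$),
\[
\mathcal{B}(d\varphi,d\varphi)=\langle S^{[p+1]}(d^\Sigma\phi),d^\Sigma\phi\rangle+nH|i_Nd\varphi|^2-\langle S^{[p]}(i_Nd\varphi),i_Nd\varphi\rangle.
\]
For the first summand, the lowest eigenvalue of $S^{[p+1]}$ is the $(p+1)$-curvature, so $\langle S^{[p+1]}(d^\Sigma\phi),d^\Sigma\phi\rangle\geq\sigma_{p+1}(\Sigma)|d^\Sigma\phi|^2\geq(p+1)c\,|d^\Sigma\phi|^2$. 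The subtle point is the combination $nH\,\mathrm{Id}-S^{[p]}$ acting on $i_Nd\varphi=-\sigma\phi$: diagonalizing the shape operator, its eigenvalues are $nH-(\kappa_{i_1}+\cdots+\kappa_{i_p})=\sum_{j\notin\{i_1,\dots,i_p\}}\kappa_j$, i.e.\ complementary sums of exactly $n-p$ principal curvatures, whose minimum is the $(n-p)$-curvature $\sigma_{n-p}(\Sigma)$. This is precisely why the hypothesis is framed in terms of $\sigma_{n-p}(\Sigma)$, and it yields $nH|i_Nd\varphi|^2-\langle S^{[p]}(i_Nd\varphi),i_Nd\varphi\rangle\geq\sigma_{n-p}(\Sigma)\sigma^2|\phi|^2\geq(n-p)c\,\sigma^2|\phi|^2$.

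Combining the two bounds gives $\left(2\sigma-(p+1)c\right)\int_\Sigma|d^\Sigma\phi|^2\,da\geq(n-p)c\,\sigma^2\int_\Sigma|\phi|^2\,da$. Finally I would observe that the right-hand side is strictly positive: if $\int_\Sigma|d^\Sigma\phi|^2\,da$ were zero the left side would vanish, forcing $\phi\equiv 0$ (as $\sigma>0$, $c>0$, $n>p$), a contradiction; hence $d^\Sigma\phi\not\equiv 0$ and the right-hand side is positive. Therefore the coefficient $2\sigma-(p+1)c$ must be strictly positive, giving exactly $\sigma>(p+1)c/2$. The non-sharpness compared with Theorem~\ref{thm-bound} is the price for discarding the weight function and the extra interior control it provided; here we simply drop the nonnegative term $\int_M(|\nabla d\varphi|^2+\langle W_M^{[p+1]}(d\varphi),d\varphi\rangle)\,dv$ rather than exploiting it via the Hessian of $f$.
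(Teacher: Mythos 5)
Your proposal is correct and follows the same main line as the paper's proof: apply the un-weighted Reilly formula \eqref{eq-RS} to $\omega=d\varphi$ (so both $d\omega$ and $\delta\omega$ vanish), use $J^*d\varphi=d^\Sigma\phi$ and $i_Nd\varphi=-\sigma\phi$ to turn the cross boundary term into $-2\sigma\int_\Sigma|d^\Sigma\phi|^2\,da$, and bound $\mathcal{B}(d\varphi,d\varphi)$ from below by the two curvature hypotheses. The one genuine difference is how strictness is obtained. The paper rewrites the pair $nH|i_N\omega|^2-\langle S^{[p]}(i_N\omega),i_N\omega\rangle$ via the Hodge-dual identity of \cite{RS11} as $\langle S^{[n-p]}(J^*(*\omega)),J^*(*\omega)\rangle\geq (n-p)c\,|i_N d\varphi|^2$, then \emph{discards} this term, obtains only $\sigma\geq (p+1)c/2$, and must run a separate equality-case analysis ($\nabla d\varphi=0$ together with $i_Nd\varphi=-\sigma J^*\varphi=0$ forces $d\varphi\equiv 0$, a contradiction) to upgrade to a strict inequality. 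You instead diagonalize $nH\,\mathrm{Id}-S^{[p]}$ directly --- its eigenvalues are the complementary sums of $n-p$ principal curvatures, which is exactly the same bound as the paper's dual formulation --- and then \emph{retain} the resulting term $(n-p)c\,\sigma^2|\phi|^2$. Since $\phi\neq 0$, $\sigma>0$, $c>0$ and $p\leq n-1$, this term is strictly positive, and your inequality $(2\sigma-(p+1)c)\int_\Sigma|d^\Sigma\phi|^2\,da\geq (n-p)c\,\sigma^2\int_\Sigma|\phi|^2\,da$ (after first ruling out $d^\Sigma\phi\equiv 0$) delivers $\sigma>(p+1)c/2$ in one step. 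This is a modest but real simplification: it trades the paper's rigidity discussion for a purely quantitative argument, at no loss of generality.
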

\begin{proof}
Let $\varphi\in \Omega^{p}(M)$ be an eigenform corresponding to the eigenvalue $\sigma$. Choosing $\omega=d\varphi\in \Omega^{p+1}(M)$ in the un-weighted Reilly formula \eqref{eq-RS} and noting as before $\delta d\varphi=0$, we deduce
\begin{align*}
0&=\int_M  |\nabla \omega  |^2+\langle W_{M}^{[p+1]}(\omega),\omega\rangle dv+2\int_\Sigma \langle \delta^\Sigma (J^*\omega),i_N \omega \rangle da+\int_\Sigma \mathcal{B}(\omega,\omega) da\\
&\geq 2\int_\Sigma \langle \delta^\Sigma (J^*(d\varphi)),i_N d\varphi \rangle da+\int_\Sigma \mathcal{B}(d\varphi,d\varphi) da\\
&=-2\sigma \int_\Sigma \langle \delta^\Sigma (J^*(d\varphi)),J^*\varphi \rangle da+\int_\Sigma \mathcal{B}(d\varphi,d\varphi) da\\
&=-2\sigma \int_\Sigma |J^*(d\varphi)|^2 da+\int_\Sigma \mathcal{B}(d\varphi,d\varphi) da.
\end{align*}
Next we use the equivalent expression for $\mathcal{B}(d\varphi,d\varphi)$ (see \cite[Thm.~3]{RS11}) to derive
\begin{align*}
\mathcal{B}(d\varphi,d\varphi)&=\langle  S^{[p+1]}(J^*d\varphi), J^*d\varphi\rangle+\langle  S^{[n-p]}(J^*(*d\varphi)), J^*(*d\varphi)\rangle\\
&\geq (p+1)c|J^*d\varphi|^2+(n-p)c|J^*(*d\varphi)|^2\\
&=(p+1)c|J^*d\varphi|^2+(n-p)c|i_N(d\varphi)|^2\\
&\geq (p+1)c|J^*d\varphi|^2,
\end{align*}
where we used the basic fact $|J^*(*\omega)|=|*^\Sigma(i_N\omega)|=|i_N\omega|$. Noting that $\int_\Sigma |J^*(d\varphi)|^2 da\neq 0$ (as $p\leq n-1$), we get immediately
\begin{align*}
\sigma \geq \frac{(p+1)c}{2}.
\end{align*}
However, $\sigma=(p+1)c/2$ cannot occur; otherwise from the argument above we will get
\begin{align*}
&\qquad \ \ \nabla d\varphi=0 \text{ on }M,\\
&0=i_N(d\varphi)=-\sigma J^*\varphi \text{ on }\Sigma.
\end{align*}
Then on the boundary $\Sigma$ we see
\begin{align*}
|d\varphi|^2=|i_N(d\varphi)|^2+|J^*(d\varphi)|^2=|i_N(d\varphi)|^2+|d^\Sigma (J^*\varphi)|^2=0,
\end{align*}
which implies $d\varphi=0$ on $M$, a contradiction. Therefore
\begin{equation*}
\sigma> \frac{(p+1)c}{2},
\end{equation*}
and the proof is complete.
\end{proof}

For the case of functions (i.e., $p=0$), the corresponding result as in Theorem~\ref{thm-non-sharp} is in \cite[Thm.~8]{Esc97}. In view of Escobar's conjecture (\cite[p.~115]{Esc99}), it would be an interesting open problem to see whether under the assumptions of Theorem~\ref{thm-non-sharp}, the sharp lower bound
\begin{align*}
\sigma \geq (p+1)c
\end{align*}
holds and further whether the rigidity result holds as well.
\begin{rem}
Under the assumptions of Theorem~\ref{thm-non-sharp} consider the case that $p\geq (n+1)/2$ and $M\subset \R^{n+1}$ is a Euclidean domain. Then $p\geq n-p$ and so the boundary curvatures satisfy $\sigma_p(\Sigma)\geq p\sigma_{n-p}(\Sigma)/(n-p)\geq pc$. Then by \cite[Thm.~1 (b)]{RS12} and \cite[Thm.~2.5]{Kar19} we get
\begin{align}
\sigma=\sigma_1^{[p]}\geq \nu_1^{[p]}\geq \frac{p+1}{p}\sigma_p(\Sigma)\geq (p+1)c.
\end{align}
Moreover, if $p>(n+1)/2$, then by \cite[Thm.~3]{RS12}, the equality $\sigma=(p+1)c$ holds if and only if $M$ is a ball of the radius $1/c$. So this case supports the above open problem.
\end{rem}

\noindent \textbf{Conflict of interest}: The author has no competing interests to declare that are relevant to the content of this article.

\noindent \textbf{Data availability statement}: Data sharing is not applicable to this article as no datasets were generated or
analyzed during the current study.


\bibliographystyle{Plain}

\end{document}